\newcommand{\keywords}[1]{\par\addvspace\baselineskip
\noindent\keywordname\enspace\ignorespaces#1}
\newcounter{observation}[section]
\newenvironment{observation}[1][]{\refstepcounter{observation}\par\medskip
   \noindent \textbf{Observation~\theobservation. #1} \rmfamily}{\medskip}
\def\acts{\curvearrowright}
\newcommand{\Rho}{{\text{\sf P}}} 
\newcommand{\abrack}[1]{[\mkern-3mu[#1]\mkern-3mu]}
\begin{document}

\mainmatter  

\title{Notes on Tractor Calculi}

\titlerunning{Notes on Tractor Calculi}

%
%


\author{Jan Slov\'ak, Radek Such\'anek}

\authorrunning{Jan Slov\'ak, Radek Such\'anek}

\institute{Department of Mathematics and Statistics,
Faculty of Science, Masaryk University\\
Kotl\'a\v rsk\'a 267/2, 611 37 Brno, Czech Republic \\
}

%
%

\toctitle{Notes on Tractor Calculus}


\maketitle

\begin{abstract} 
These notes present elementary introduction to tractors based on classical 
examples, together with glimpses towards modern invariant
differential calculus related to vast class of Cartan geometries, the so
called parabolic geometries. 
\keywords{Cartan connection, tractor bundle, tractor connection, homogeneous
model, prolongation of overdetermined PDE systems, BGG machinery}
\end{abstract}

This is a survey article based on the lectures given by the first author
at the Summer School Wis\l a 19, 19-29 August, 2019, captured by the second
author. The exposition aims at quick understanding of basic principles,
omitting many proofs or at least their details. The reader might find a lot of
further information in the cited sources throughout the text. In particular,
our approach has been heavily inspired by \cite{A}, while the general
background on Cartan geometries including the tractors can be found in
\cite{B}. 

The six sections of the article roughly correspond to the six lectures
(about 100 minutes each). We first introduce some elements of
the tractor calculus in quite general situation. Then we 
focus rather on the overall structure of the invariant linear differential 
operators and we do not present much of the tractor calculus itself. In this
sense, these lecture notes are complementary to \cite{A}, where the reader
should look for the genuine calculus.   

The audience was assumed to have basic knowledge
of differential geometry as well as some representation theory 
(Lie groups and algebras, their representations, principal and associated bundles,
connections, tensors, etc.). All this background can be found e.g. in
\cite{KMS} and \cite{B}.


 \section{Tracy Thomas' conformal tractors}

Let us start with a quick review of the two very well known geometries, the
Riemannian and the conformal Riemannian ones.

\subsection{Riemannian sphere} 
There are many ways how to view the standard sphere 
$$
S^n=\{ x \in \mathbb{R}^n\ |\ \|x \| = 1 \}
$$ 
as a homogeneous space. Perhaps the most common one is to consider the 
orthogonal group $ G = \operatorname{O}(n+1)$ which keeps $S^n$ invariant 
and its subgroup $H$ of the maps fixing a given point $o\in
S^n$, isomorphic to $\operatorname{O}(n)$.  
Clearly $S^n =
\operatorname{O}(n+1)/\operatorname{O}(n)$.  The Lie algebras
$\mathfrak{g}=\operatorname{Lie}G$ and $\mathfrak{h}=\operatorname{Lie}H$
enjoy the nice matrix $(1,n)$ block structure
$$ \mathfrak{g} = \begin{pmatrix} 0 & -v^{\operatorname{T}} \\
v & X \end{pmatrix}, \quad \mathfrak{h} = 
\begin{pmatrix} 0 & 0 \\ 0 & X \end{pmatrix}.
$$  
The tangent spaces are $T_xS^n = \{ y \in \mathbb{R}^{n+1} |\ \langle x,y
\rangle = 0 \} $ and the action of $G$ on $\mathbb R^{n+1}$, $x \mapsto Ax $,
preserves both $S^n$ and $TS^n$, i.e. $ y \mapsto Ay $ maps  $T_x S^n
\mapsto T_{Ax} S^n $. Moreover, they preserve the scalar products on the
tangent spaces and thus $S^n $ enjoys
$\operatorname{O}(n+1)$ as isometries of the natural structure of a
Riemannian manifold $(S^n,g)$.
 
\begin{observation}
There are no other isometries of $S^n$ apart from $\operatorname{O}(n+1)$. 
\end{observation}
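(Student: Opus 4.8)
The plan is to show that every isometry of $S^n$ is rigidly determined by its value and differential at a single point, and then to exploit the fact that $\operatorname{O}(n+1)$ already realises every admissible such pair. Concretely, I would first establish the \emph{rigidity lemma}: if $\phi,\psi$ are isometries of a connected Riemannian manifold $M$ with $\phi(p)=\psi(p)$ and $d\phi_p=d\psi_p$ at some point $p$, then $\phi=\psi$. Granting this, the statement reduces to checking that $\operatorname{O}(n+1)$ acts transitively on the bundle of orthonormal frames of $S^n$, since any isometry has the same $1$-jet at a point as a suitable group element.

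To prove the rigidity lemma I would use that isometries preserve the Levi-Civita connection and hence carry geodesics to geodesics, which gives the intertwining relation $\phi\circ\exp_p=\exp_{\phi(p)}\circ\, d\phi_p$ on a normal neighbourhood of $p$. If $\phi$ and $\psi$ share the same $1$-jet at $p$, this identity forces them to agree on a whole geodesic ball around $p$, and by differentiating, their differentials agree there as well. Thus the set of points at which $\phi$ and $\psi$ have the same $1$-jet is open; it is plainly closed and non-empty, so by connectedness of $S^n$ it is all of $S^n$, giving $\phi=\psi$.

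For the frame transitivity, recall from the block description above that for the base point $o\in S^n$ we have $T_oS^n=o^\perp\subset\mathbb R^{n+1}$, and the stabiliser $H\cong\operatorname{O}(n)$ acts on $T_oS^n$ as the full orthogonal group, hence transitively on orthonormal frames at $o$. Since $\operatorname{O}(n+1)$ is already transitive on points of $S^n$, it follows that $\operatorname{O}(n+1)$ is transitive on the orthonormal frame bundle of $S^n$. Now, given an arbitrary isometry $\phi$, the differential $d\phi_o$ is a linear isometry, so it sends a fixed reference orthonormal frame at $o$ to an orthonormal frame at $\phi(o)$; choose $A\in\operatorname{O}(n+1)$ realising this same frame, so that $A(o)=\phi(o)$ and $dA_o=d\phi_o$. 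By the rigidity lemma $\phi=A$, whence $\phi\in\operatorname{O}(n+1)$, as claimed.

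I expect the main obstacle to be the rigidity lemma, specifically the passage from agreement of the $1$-jet at one point to agreement on a neighbourhood via the exponential intertwining; everything else is a dimension-free bookkeeping argument once geodesic normal coordinates are in hand. An alternative route would invoke the Myers--Steenrod theorem together with the dimension count $\dim\operatorname{O}(n+1)=\tfrac{n(n+1)}{2}$, which is the maximal possible isometry dimension in dimension $n$, but this only pins down the identity component cleanly and would require extra care for the group of connected components, so I prefer the frame-transitivity argument above, which handles the full group at once.
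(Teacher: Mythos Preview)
Your argument is correct and coincides with the paper's first (``standard'') proof: the paper also invokes the fact that a Riemannian isometry on a connected manifold is determined by its $1$-jet at a point, then uses that $\operatorname{O}(n+1)$ already realises every admissible value-and-differential pair, exactly as you do via frame transitivity. Your write-up is simply more explicit about the rigidity lemma, which the paper takes as ``well known''.

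It is worth noting that the paper then presents a second, more conceptual proof that you do not mention: one lifts an isometry $\phi$ to the orthonormal frame bundle, identifies that bundle with $G=\operatorname{O}(n+1)$ via the soldering form, observes that the lift preserves both the soldering form and the Levi-Civita connection form and hence the full Maurer-Cartan form $\omega$, and then applies the uniqueness clause of the fundamental theorem (Theorem~\ref{FTC}) to conclude that the lift is left translation by some $c\in G$. This route is longer but is the one the paper emphasises, since it generalises directly to the Cartan-geometric setting developed later.
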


The standard way to see the above observation holds true is the following. 
Consider a unit vector $e_1 \in \mathbb{R}^{n+1}$ and an isometry $\phi \in
\operatorname{Isom}(S^n, g)$.  Then $\phi (e_1) \in S^n$ and $\phi (e_1) = A
(e_1) $ for some $A \in \operatorname{O(n+1)}$.  Moreover, elements of the
form $A^{-1} \circ \phi $ are in the isotropy group of $e_1$.  As well known
the Riemannian isometries are (on connected components) uniquely determined
by their differential in one point.  Thus the latter map coincides with its
differential at $e_1$ and we are finished.

Another possibility is based on the Maurer-Cartan form. It is more
complicated, but much more conceptual.

Consider the principal $H$-bundle $G \xrightarrow{p} G/H$ over $S^n \cong
G/H$ equipped with the Maurer-Cartan form $\omega \in \Omega^1 \left( G,
\mathfrak{g} \right)$.  Since $\mathfrak{g}$ splits as $\mathfrak{g} =
\mathfrak{h} \oplus \mathfrak{n} $ (as $\mathfrak h$-module), 
the Maurer-Cartan form also splits as
$\omega = \omega_{ \mathfrak{h}} \oplus \omega_{ \mathfrak{n}}$, where the
first part is the principal connection $\omega_{ \mathfrak{h}}$ on $G$ and
the second part is the soldering form $\omega_{\mathfrak{n}}$.  The
soldering form provides for all $A \in G$ isomorphisms
\[
 \omega_{ \mathfrak{n}} \colon T_A G / V_A G \xrightarrow{\cong} T_{p(A)} S^n \cong \mathbb{R}^n  \ , 
\]
 where $V_A G : = \operatorname{ker} \omega_{\mathfrak{n}}$ is the vertical
 subspace.  This means $\omega_{\mathfrak{n}}$ makes $G \to S^n$ into
 a reduction of the linear frame bundle $P^1 S^n$ to $H$.  

Now any isometry $\phi$ lifts to the
 level of frame bundles and can be restricted to $G$ and thus we have a
 lift $\Tilde{\phi} \colon G \xrightarrow{} G$ such that $\Tilde{\phi}^{*}
 \omega_{ \mathfrak{n}} = \omega_{ \mathfrak{n}} $.  Because $\omega_{
 \mathfrak{h}} $ is principal connection preserving the metric we also have
 $\Tilde{\phi}^{*} \omega_{ \mathfrak{h}} = \omega_{ \mathfrak{h}} $.  We
 see that $\Tilde{\phi}^{*} \omega = \omega$, i.e.  $\Tilde{\phi}$ preserves
 the Maurer-Cartan form.  

Notice that in this setting, $\omega_{\mathfrak h}$ must be the only torsion
free metric connection on $S^n$. Thus, we arrived at the canonical Cartan
connection on $S^n$ in the sense of the so called \textit{Cartan
geometry} as defined below. 

For any principal fiber bundle $\mathcal G$ with structure group $P$, we
shall write $r^g$ for the principal right action of elements in $P$ and
$\zeta_X$ means the fundamental vector field, $\zeta_X(u)=\frac
d{dt}{}_{|_0}r^{\operatorname{exp}tX}(u)$.

 \begin{definition} \label{CG} For a pair $H \subset G$ of a Lie group and its Lie
subgroup, a Cartan geometry is a principal $H$-bundle $p: \mathcal{P} \to M$
endowed with a $\mathfrak{g}$-valued $1$-form $\omega \in \Omega^1 (
\mathcal{P},\mathfrak{g} ) $ satisfying for all $h \in H, X \in
\mathfrak{h}, u \in \mathcal{P}$ the following three properties

 \begin{align}
      \operatorname{Ad}(h^{-1}) \circ \omega & = (r^h)^{*} \omega , \\ 
      \omega (\zeta_X (u) ) & = X , \\
      \omega (u)  \colon T_u \mathcal{P} & \xrightarrow{\cong} \mathfrak{g} .
 \end{align}
 \end{definition}
 
Now, our observation follows from a general result:

\begin{theorem}[Fundamental theorem of calculus]\label{FTC}
 Let $\omega_G$ be the Maurer-Cartan form of a Lie group $G$ with the Lie
 algebra $\mathfrak{g}$, $M$ a smooth manifold endowed with a $1$-form
 $\omega \in \Omega^1(M, \mathfrak{g})$.  Then for each $x \in M$ there is a
 neighborhood $ U \ni x$ and $ f \colon U \to G$ such that $f^{*}
 \omega_G=\omega$, if and only if  
\begin{equation}\label{FTC-condition}
\operatorname{d} \omega + \frac{1}{2} [\omega, \omega] = 0 . 
\end{equation} 
If $M$ is connected and $f_1, f_2 \colon M \to G$ with $f_1^{*} \omega_G
 = f_2^{*} \omega_G$ on $M$, then there exists a unique $c \in G$ such that 
$f_2 = c f_1 $ on $M$.  
\end{theorem}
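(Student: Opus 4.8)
The statement packages three claims, and the plan is to treat them in turn. \textbf{Necessity.} The Maurer--Cartan form of $G$ satisfies its own structure equation $d\omega_G+\frac12[\omega_G,\omega_G]=0$. If $f\colon U\to G$ satisfies $f^{*}\omega_G=\omega$, then applying $f^{*}$ to this equation and using that pullback commutes both with $d$ and with the bracket of $\mathfrak g$-valued forms yields \eqref{FTC-condition} for $\omega$. This direction is immediate and carries no difficulty.

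\textbf{Local existence.} For the converse I would run a Frobenius argument on the product $M\times G$. Write $\operatorname{pr}_M,\operatorname{pr}_G$ for the two projections and set $\theta=\operatorname{pr}_M^{*}\omega-\operatorname{pr}_G^{*}\omega_G\in\Omega^1(M\times G,\mathfrak g)$. Because $\omega_G(g)\colon T_gG\to\mathfrak g$ is an isomorphism at every $g$, the kernel $\mathcal D=\ker\theta$ is a distribution of rank $\dim M$ transversal to the fibres $\{x\}\times G$, so that $\operatorname{pr}_M$ restricts to a fibrewise isomorphism $\mathcal D_{(x,g)}\xrightarrow{\cong}T_xM$. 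The key computation is involutivity: using the structure equation for $\omega_G$ together with the hypothesis \eqref{FTC-condition} for $\omega$ one gets
\[
d\theta=-\tfrac12[\operatorname{pr}_M^{*}\omega,\operatorname{pr}_M^{*}\omega]+\tfrac12[\operatorname{pr}_G^{*}\omega_G,\operatorname{pr}_G^{*}\omega_G],
\]
and on vectors lying in $\mathcal D$ the two pullbacks agree (there $\operatorname{pr}_M^{*}\omega=\operatorname{pr}_G^{*}\omega_G$), so $d\theta$ restricts to zero on $\mathcal D$. Hence $\mathcal D$ is involutive and the Frobenius theorem provides integral manifolds. Through $(x,g_0)$ the leaf is, by the transversality just noted, locally the graph of a smooth map $f\colon U\to G$ with $f(x)=g_0$; and the vanishing of $\theta$ on the tangent spaces of this graph says exactly $\omega_y(v)=\omega_G(df_y(v))=(f^{*}\omega_G)_y(v)$, i.e. $f^{*}\omega_G=\omega$ on $U$.

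\textbf{Uniqueness.} For the last claim I would form $F=f_2\cdot f_1^{-1}\colon M\to G$ and compute its logarithmic derivative $F^{*}\omega_G$. Using the product and inverse rules for the Darboux derivative $\delta h:=h^{*}\omega_G$, namely $\delta(ab)=\operatorname{Ad}(b^{-1})\delta a+\delta b$ and $\delta(a^{-1})=-\operatorname{Ad}(a)\delta a$ (both read off from the differential of multiplication in $G$), one finds $F^{*}\omega_G=\operatorname{Ad}(f_1)\bigl(f_2^{*}\omega_G-f_1^{*}\omega_G\bigr)=0$. Since $\omega_G$ is a pointwise isomorphism, $F^{*}\omega_G=0$ forces $dF=0$, so $F$ is locally constant; connectedness of $M$ then gives $F\equiv c$ for a single $c\in G$, that is $f_2=cf_1$. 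The element $c=f_2(x)f_1(x)^{-1}$ is manifestly unique.

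The genuine work sits in the existence part: checking that $\mathcal D$ is involutive and, above all, recognising the Frobenius leaf as the graph of a function, which is where transversality of $\mathcal D$ to the group fibres is indispensable. Once this is in place, the necessity and uniqueness statements reduce to formal manipulations with the structure equation and the Darboux derivative.
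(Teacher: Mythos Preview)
Your argument is correct and follows exactly the route the paper indicates: the paper merely says ``the theorem is proved by building the graph of the mapping $f$'' and refers to \cite[section 1.2.4]{B}, and your Frobenius argument on $M\times G$ with the distribution $\ker(\operatorname{pr}_M^{*}\omega-\operatorname{pr}_G^{*}\omega_G)$ is precisely that graph construction spelled out in full. The necessity and uniqueness parts are standard and handled correctly as well.
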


Under the additional requirement that $Tf:T_xM\to T_{f(x)}G$ is a linear
isomorphism for each point $x$, the theorem shows that the local Lie group
structure is uniquely determined by the Maurer-Cartan form satisfying
\eqref{FTC-condition}.
  
The theorem is proved by building the graph of the mapping $f$, see
\cite[section 1.2.4]{B}. Notice, in dimension one the condition is empty and
so with the additive group $G=\mathbb R$ we obtain just the existence of
primitive functions up to a constant. If $G$ was the multiplicative group
$\mathbb R_+$, the theorem would show how the logarithmic derivatives prescribe the
functions, up to a constant multiple. 

In our case each isomorphism $\phi:S^n\to S^n$ lifts to the unique map
$f:G\to G$ satisfying $f^*\omega = \omega$. The Maurer-Cartan forms on all
Lie groups satisfy the condition in \eqref{FTC-condition} and thus, even locally,
$f$ can differ from the
identity map only by an element of $G$. 
 
 
\subsection{Conformal Riemannian sphere} 
 
A conformal Riemannian manifold $(M , [g] ) $ is a manifold $M$ with a
conformal class of metrics.  Two metrics $g$, $\Tilde g$ 
are representatives of the same
conformal class if they differ by some positive function, $\Tilde{g} = \Omega^2 g$, $
\Omega \in C^{\infty} (M)$. Conformal isometry is a diffeomorphism, whose
differentials at all points belong to the conformal orthogonal group
$\operatorname{CO}(n)$ for the given structures on the tangent spaces.

The conformal sphere is $(S^n , [g] ) $ where $[g]$ includes the standard
round metric.  Let us discuss the following question: \textit{What is the group of all conformal isomorphisms on $S^n$
making it into a homogeneous space $G/P$?}

\smallskip
\noindent\textbf{Option 1.} We can go the `brutal force' way. Take $\mathbb
R^n$ with the conformal class containing the Euclidean metric and write down
the PDEs for an arbitrary locally defined conformal isomorphism $\phi$,
i.e., we request the differentials of $\phi$ are in $\operatorname{CO}(n)$ at
all points.  There is the famous Liouville theorem saying that each such
$\phi$ is generated by the Euclidean motions and the sphere inversions.  An
elementary (but tricky) proof can be found in \cite[section 5.4]{S-Vienna}. 
In particular, if we compactify $\mathbb R^n$ by the one point at infinity, we
can extend all such local diffeomorphisms to globally defined conformal maps
on $S^n$.

Let us try to do it in a smart way. Consider $\mathbb R^{n+2}$ with the
pseudo-Euclidean metric $Q(x,x)=2x_0x_{n+1}+x_1^2+\dots+x_n^2$ of signature
$(n+1,1)$ and define $C$ to be the null-cone of this metric.  Now, we may identify the
sphere $S^n$ with the projectivization $\mathbb PC$ of this cone and write down
the action of all the latter maps in projective coordinates on $\mathbb P\mathbb
R^{n+2}$.  We can represent the null-vectors of the affine $\mathbb R^n\subset
S^n$ as $(1:x:-\frac12\|x\|^2)$, while the remaining infinite point in $S^n$
is $(0:0:1)$.  Now we may easily identify the above conformal maps as actions of
particular matrices in $\operatorname{O}(n+1,1)$ on the projectivized cone
$\mathbb PC$.

\begin{align}
\label{action-co}
\begin{pmatrix} 1 \\ x \\ -\frac12\|x\|^2\end{pmatrix} &\mapsto
\begin{pmatrix} a^{-1} & 0 & 0 \\ 0 & A & 0 \\
0 &
0& a \end{pmatrix} \begin{pmatrix} 1 \\ x \\ -\frac12\|x\|^2
\end{pmatrix} 
\\
\label{action-translations}
\begin{pmatrix} 1 \\ x \\ -\frac12\|x\|^2\end{pmatrix} &\mapsto
\begin{pmatrix} 1 & 0 & 0 \\ v & E & 0 \\
-\frac12\|v\|^2 &
-v^T & 1 \end{pmatrix} \begin{pmatrix} 1 \\ x \\ -\frac12\|x\|^2
\end{pmatrix} 
\\
\label{action-sphere-inversion}
\begin{pmatrix} 1 \\ x \\ -\frac12\|x\|^2\end{pmatrix} &\mapsto
\begin{pmatrix} 0 & 0 & -2 \\ 0 & E & 0 \\
-\frac12 & 0& 0 \end{pmatrix}
\begin{pmatrix} 1 \\ x \\ -\frac12\|x\|^2
\end{pmatrix} 
\end{align}

Notice in the last line that the sphere inversion
$\sigma\in\operatorname{O}(n+1,1)$ is in 
a different component than the unit,
while the nontrivial maps fixing the origin and having the identity as 
differential there are obtained by composing $\sigma\circ\tau_v\circ \sigma$,
with the translation $\tau_v\in \operatorname{O}(n+1,1)$ from
\eqref{action-translations}, cf.
\cite[section 5.10]{S-Vienna}.

\smallskip
\noindent\textbf{Option 2.} Similarly to the Riemannian case, we first
choose the right homogeneous space $S^n=G/P$ with
$G=\operatorname{O}(n+1,1)$, 
$P$ the isotropy group of one fixed point in $S^n$,
and show that $G$ is just the group of all conformal isomorphisms. 
Again, we can achieve that by building a reasonably normalized Cartan
geometry for each conformal Riemannian manifolds. Then the Maurer-Cartan
form $\omega_G$ of $G$ will be preserved by all conformal morphisms and thus the 
Theorem \ref{FTC} applies. 

We shall come back to such normalizations of Cartan geometries later in
the fifth lecture.
 
At the level of Lie algebras, $\mathfrak g=\operatorname{Lie}G$ decomposes
as $\mathfrak g = \mathfrak g_{-1} \oplus \mathfrak p$, where $\mathfrak
g_{-1}$ are the infinitesimal translations with matrices 
$$
\mathfrak g_{-1} = \biggl\{ \begin{pmatrix} 0 & 0 & 0 \\ v & 0 & 0 \\
0 &
-v^T & 0\end{pmatrix}  
\biggr\}
$$
while
\[ \mathfrak{p} = \mathfrak g_0 \oplus \mathfrak g_1 = \biggl\{ \underbrace{ \begin{pmatrix} -a & 0& 0\\0 & A & 0\\ 
0& 0& a \end{pmatrix} }_{\mathfrak{co}(n)}  \oplus \underbrace{
\begin{pmatrix} 0& w & 0\\ 0& 0 & -w^T \\ 0& 0& 0 \end{pmatrix}
}_{\mathbb{R}^n}\biggr\}.  
\]
Clearly, $\mathfrak g_1$ is a $\mathfrak p$-submodule (actually an ideal), 
while $\mathfrak g_0$
is identified with the $\mathfrak p$-module $\mathfrak p/\mathfrak g_1$ with the
trivial action of $\mathfrak g_1$. This is the well known decomposition of
$\mathfrak p$ into
the reductive quotient and the nilpotent submodule.

At the level of Lie groups, this corresponds to the splitting of the
isotropy group $P$ into the semi-direct product of $\operatorname{CO}(n)$
containing the conformal isomorphisms fixing the origin and determined by
their first derivatives, and the nilpotent normal subgroup $P_+\subset P$ of 
those conformal isomorphisms fixing the origin with trivial first differential 
and determined by the
second order derivatives. We shall also write $G_0=P/P_+$ and this
reductive group decomposes further into the semisimple part
$\operatorname{O}(n)$ and the center $\mathbb R$.

%
 
%
%
 
\subsection{Towards tractors}\label{tractors-on-sphere}

The conformal Riemannian structure on $S^n$ can be read off the standard
metric $Q$ on $\mathbb R^{n+2}$ as follows. Any choice of a non-zero 
section of the null-cone $C$ (e.g. we may choose one of the non-zero components
$C_+$ of $C$ and consider sections there), seen as line bundle over its
projectivization $S^n$, provides the
identification of the tangent bundle $T_pS^n$ with the quotient of 
$$
T_pC_+ 
= \{z\in \mathbb R^{n+2}\ |\ Q(z,p)=0\}=\langle p\rangle^\perp
$$ 
by the line $\langle p\rangle$ (notice $p$
is null, so this line is in the tangent space). Clearly $\langle
p\rangle^\perp/\langle p\rangle$ is linearly isomorphic to $T_pS^n$ and
since $p$ is null, $Q$ induces a positive definite metric on $T_pS^n$. If we
multiply $p$ by a constant $a\ne0$, then the induced metric will change by the
positive multiple $a^2$. By the very construction, this conformal structure
is invariant with respect to the natural action of $\operatorname{O}(n+1,1)$
on the cone $C$. Thus, we may also view $C_+$ as the square root of the line
bundle of the conformal metrics in this class.

Of course, there is no preferred affine connection on $S^n$ in this picture. 
But if we consider the flat affine connection $\nabla$ on $\mathbb R^{n+2}$, then we
can consider the parallel (constant) vector fields in the trivial vector
bundle $C\times \mathbb R^{n+2}$ along the null-lines in $C$ and view them as
fields in the trivial vector bundle $\mathcal TS^n = S^n\times \mathbb
R^{n+2}$.       

The slight problem with this point of view is that we should expect that the
fibers of $\mathcal TS^n$ split into the `vertical part' along the null-lines 
in $C$, the `tangent part' to $S^n$ and the complementary 1-dimensional part
in $\mathbb R^{n+2}$. While the vertical part is well defined, 
such a splitting clearly depends on the choice of the
identification of $S^n$ with a section of $C\to S^n$. Moreover, we
should hope to inherit an invariant connection from the flat connection
$\nabla$ on $\mathbb R^{n+2}$. Before answering these questions, we are
going to indicate a much simpler abstract description of such objects and we
come back to these functorial objects and constructions in the fourth lecture.  
 
 
 
Let $H \subset G$ be a Lie subgroup and $G\to G/H$ the corresponding 
Klein geometry. Notice that $G \to G/H$ is a principal $H$-bundle. 
Consider any linear representation $\mathbb{V}$ of $G$ and 
the associated bundle $\mathcal{V} = G \times_H \mathbb{V}$, i.e., the
classes of the equivalence relations on $G\times \mathbb V$ 
given by $ (u,v) \sim (u \cdot h, h^{-1} \cdot v)$.\footnote{Here $u \cdot h$ 
is multiplication in $G$  and $h^{-1} \cdot v$  is the left-action of 
$H$ or $G$ on $\mathbb{V}$ given by the chosen representation.} 

In particular, we may identify the class  
$\abrack{u,v}$ with the couple $(u \cdot H, u \cdot v)$. 
Indeed, taking another representative, we arrive at 
$$
((u\cdot h)\cdot H, u\cdot h\cdot (h^{-1}\cdot
v))=(u\cdot H,u\cdot v)
$$ and thus 
$\mathcal{V}$ is the trivial bundle on $M$ 
\[ 
\mathcal{V} = G/H \times \mathbb{V}. 
\]
Moreover, there is the Maurer-Cartan form $\omega$ on $G$. Extending
$G\to G/H$ to the principle $G$-bundle
$\Tilde{G} = G \times_H G\to G/H$, the form $\omega$ uniquely extends to a
principal connection form $\Tilde \omega$ on $\Tilde G$. Finally, we can
further identify $\mathcal{V}$ as the associated space 
$\mathcal{V} = \Tilde{G} \times_G
\mathbb{V}$.  Thus we see that there is the induced connection $\nabla$ 
on all such bundles $\mathcal{V}$. 


 \subsection{Tracy Thomas' tractors $\mathcal{T}$}
Now we come back to the conformal sphere and we apply the above abstract 
construction. Thus,
$G=\operatorname{O}(n+1,1)$, and $H=P\subset G$ is the isotropy group of the
fixed origin $(1:0:0)$, i.e., the Poincare subgroup in $G$ with the Lie
algebra $\mathfrak p$ as discussed above.
Further, we may take $\mathbb T= \mathbb
R^{n+2}$ with the standard action of $G$.

The final ingredients we need are the weights of line bundles or more
general tensor bundles on conformal
manifolds. Consider $\mathbb R[w]$ as the representation of $P$ such that $P_+$
and $\operatorname{O}(n)$ act trivially, while the central element
$\lambda=\operatorname{exp}(aE)\in G_0$
acts as $\lambda\cdot x=\operatorname{e}^{-a w}x$. Here $E$ is the so called grading element in
$\mathfrak g$, i.e.,
$$
\lambda = \operatorname{exp}\begin{pmatrix} a & 0 & 0\\ 0 & 0 &
0\\ 0 & 0 & -a\end{pmatrix} = \begin{pmatrix} \operatorname{e}^a & 0 & 0\\ 0 &
\operatorname{Id}_n & 
0\\ 0 & 0 & \operatorname{e}^{-a}\end{pmatrix} 
.$$  
Now, the line bundles of weights $w$ are defined as 
$$
\mathcal E[w] = G\times_P \mathbb R[w]
.$$
At the level of the infinitesimal action, the central element $aE\in\mathfrak
g_0$ will act as $a\cdot x = -wax$.
Notice the minus sign convention -- this is
because we want the line bundle of the conformal metrics to get the weight
two. 

Taking tensor products with line bundles, we arrive at the general weighted
bundles $\mathcal V[w] = \mathcal V\otimes \mathcal E[w]$ for any
representation $\mathbb V$ of $P$ or even $G$. 

Now, look at the action \eqref{action-co} on the components of $\mathbb T$. We
see immediately that the representation space $\mathbb T=\mathbb R^{n+2}$ splits as
$G_0$-module into 
$$
\mathbb T = \mathbb R[1] \oplus \mathbb R^n[-1]\oplus \mathbb R[-1],
$$
where the right ends are $P$-submodules. In particular, $\mathbb R[-1]$
is a $P$-submodule, while $\mathbb R[1]$ is the projecting component. Thus,
the trivial bundle $\mathcal T$ splits (once a section
of $C_+$ and thus one of the metrics in the class are fixed): 
\[ \mathcal{T} =
\mathcal{E}[1] \oplus \underbracket{TS^n [-1] \oplus
\overbracket{\mathcal{E} [-1]}^{VC_+ }}_{TC_+} .
\]

As mentioned above, the bundle $\mathcal T$ comes equipped with the canonical
metric induced by $Q$ and $TC_+=(VC_+)^\perp$. Thus, there is the positive
definite metric $\mathbf g:TS^n[-1]\times TS^n[-1]\to \mathcal E$, i.e.
$\mathbf g$ is a section of $S^2(T^*S^n)[2]$. This is the conformal class of
metrics on $S^n$ viewed as the section of a weighted metric bundle and it
allows us to raise and lower tensor indices of arbitrary tensors exactly as 
in the Riemannian case, but at the expense of adding or subtracting the weight 2.
For example, we may write $\mathcal T=\mathcal E[1]\oplus
T^*S^n[1]\oplus \mathcal E[-1]$.

Finally,
any section $\sigma$ of the projecting part $\mathcal E[1]$ provides the
Riemannian metric $g=\sigma^{-2}\mathbf g$.

\section{Conformal to Einstein and the tractor connection}

Tracy Thomas came across his conformal tractors in
\cite{F}, when constructing basic invariants of conformal geometry 
via a linear connection on a suitable
vector bundle (instead of building an absolute parallelism in the Cartan's
approach). He succeeded in finding the simplest of such vector bundles,
together with an invariant linear connection. He also worked out the
necessary transformation properties based on the so called Schouten tensor. 

All these objects were reinvented in \cite{G} and here the authors also
discussed the following question:
\textit{Given a conformal class $[g]$ on a manifold, 
is there a representative of the class which is an Einstein metric?} We
shall follow this development and thus we shall find the Thomas' tractors
when prolonging a conformally invariant geometric PDE 
(also following \cite{A}). 

In the sequel, we shall use the abstract index formalism. Moreover we shall
mostly not distinguish between the bundles $\mathcal VM$ and the spaces of
their sections $\Gamma(\mathcal VM)$. Thus, we shall talk about vector fields
in $\mathcal E^a$ or one-forms in $\mathcal E_a$. Similarly, $\eta_{ab}$ is
either a two-form in $\mathcal E_{ab}$ or a $\mathcal E_a$-valued one-form.
As usual, repeated indices at different positions (lower versus upper) mean
the relevant trace.

\subsection{The Einstein scales}
Recall that the curvature $R_{ab}{}^c{}_d$ of the unique torsion-free metric
connection $\nabla$ decomposes into the trace-free Weyl tensor $W_{ab}{}^c{}_d$
and the Ricci tensor $R_{ab}$. We shall see later why a trace-adjusted
version of $R_{ab}$, the so called Schouten tensor,
$$
P_{ab} = \frac{1}{n-2}\left( R_{ab} - \frac{1}{2(n-1)} R g_{ab} \right)
$$
where $R=g^{ab}R_{ab}$ is the scalar curvature, is very useful. (Notice, 
here we use the opposite sign convention for the
Schouten tensor $P$ than
in \cite{B}, i.e. it is the same as in \cite{A}.)

Of course, we should believe there is an overdetermined distinguished 
PDE system on the scales, i.e. the choices of the metrics in the class, whose
solutions correspond to the Einstein scales. We can write down all such PDEs
with the help of any of the metrics in the class and, as a matter of fact,
the equation must be independent of our choice, i.e. \emph{conformally
invariant}. A straightforward check reveals
(we shall come to such techniques later) that the following equation on (the
square roots of) the scales
$\sigma$ in $\mathcal E[1]$ is invariant 
 \begin{align}\label{Einstein condition}
     \boxed{\nabla_{(a} \nabla_{b)_0} \sigma + P_{(ab)_0} \sigma = 0  }
 \end{align}
Now, if $\sigma$ is a nowhere zero solution, we may write the equation using 
the metric connection corresponding to $\sigma$ and thus, $\sigma$ is 
parallel and we arrive at $P_{(ab)_0}=0$. This is
exactly the condition to be Einstein, i.e., the trace-free part of Ricci
vanishes and $\sigma^{-2}\mathbf g_{ab}$ is Einstein.  

We are going to apply the classical method of prolongation of overdetermined
systems of PDEs to show that solutions of \eqref{Einstein condition} 
are equivalent to \textit{parallel tractors} in $\mathcal T$.

 
First, we add trace part $\rho\mathbf g$ to the equation \eqref{Einstein condition}, 
i.e, $\rho$ is a new $(-1)$-weighted quantity $\rho \in \mathcal{E} [-1]$
and the new equation becomes  
 \begin{equation}
      \nabla_{(a} \nabla_{b)} \sigma + P_{(ab)} \sigma + \mathbf{g}_{ab} \rho = 0  . 
 \end{equation} 
Moreover, we know that the Ricci curvature is symmetric for all scales, i.e.
the Levi-Civita connections of the metrics in the class, and 
thus the Schouten tensor is symmetric too. Finally, the antisymmetric part
of the second order derivative is given by the action of the curvature
$R_{ab}{}^c{}_d$ as a 2-form valued in the Lie algebra $\mathfrak{so}(n,
\mathbb R)$ and these values have no central component to act on the
densities $\mathcal E[w]$. Thus our equation becomes 
\begin{equation} \label{complete-eq}
\nabla_a \nabla_b \sigma + P_{ab} \sigma + \mathbf{g}_{ab} \rho = 0 .
\end{equation}
 

In the next step, we give the derivative $\nabla_a\sigma $ the new name
$\mu_a=\nabla_a \sigma \in \mathcal E_a[1]$.\footnote{We know that 
$\mu_a$ must be of weight $1$ because covariant differentiation does 
not alter weights and $\sigma$ is already of weight $1$.} 
Thus the latter equation \eqref{complete-eq} can be rewritten as the system
of two first order equations 
\begin{equation} \label{prolonged equation}    
\boxed{ 
\begin{aligned}
\nabla_a\sigma - \mu_a &= 0
\\  
\nabla_a\mu_b + P_{ab} \sigma + \mathbf{g}_{ab} \rho &= 0 
\end{aligned}
}
\end{equation}

This system is not yet closed since there is still the uncoupled 
variable $\rho$. Thus we have to prolong the system and we 
need some computational preparation first.

Recall the invariant conformal metric $\mathbf{g}$ is covariantly constant
in all scales and differentiate \eqref{complete-eq}:
\begin{equation}\label{differential consequence 1}
\nabla_a\nabla_b\nabla_c\sigma + \mathbf{g}_{bc} \nabla_a\rho + 
(\nabla_aP_{bc}) \sigma + P_{bc} \nabla_a\sigma   = 0 \ . 
\end{equation}
Contract \eqref{differential consequence 1} by hitting it with 
$\mathbf{g}^{ab}$ and $\mathbf{g}^{bc}$, respectively: 
\begin{align}\label{a)}
     \Delta (\nabla_c\sigma) + \nabla_c\rho + \nabla^a P_{ac} \sigma 
+ P^a{}_{c} \nabla_a\sigma &= 0 , 
\\ \label{b)}
     \nabla_a( \Delta \sigma ) + n \nabla_a\rho + 
\nabla_a P \sigma + P \nabla_a\sigma &= 0 ,
\end{align}
where $P$ is the trace of $P_{ab}$. Next, contracting the Bianchi
identity and some straightforward computations lead to 
\begin{align}\label{i)}
     \nabla^aP_{ac} & = \nabla_c P
\\ \label{ii)}
     [ \nabla_c, \Delta ] &= R_{cb}{}^b{}_d \nabla^d \ .
 \end{align}
 Subtracting \eqref{a)} from \eqref{b)} and 
using \eqref{i)} and \eqref{ii)} we arrive at
\begin{equation}\label{midstep1}
 (n-1)\nabla_c\rho + P \nabla_c\sigma - P^a{}_{c} \nabla_a\sigma +  
R_{cb}{}^b{}_d \nabla^d \sigma = 0 .
 \end{equation}
Further notice 
 \[ 
R_{cb}{}^{b}{}_{d} \nabla^d \sigma = - R_{ca} \nabla^a  \sigma = 
(2-n)P_c{}^{a} \nabla_a\sigma- \nabla_cP \sigma 
\]
 which together with \eqref{midstep1} yields, up to the constant factor $n-1$ 
 \begin{align}\label{differential consequence 2}
     \boxed{\nabla_c\rho - P_c{}^{a} \mu_a  = 0 }
 \end{align}
and our system of equations closes up. Summarizing, the Einstein scales
correspond to nowhere zero solutions of our system of three first order
equations coupling $\sigma$, $\mu_a$, and $\rho$ and all this should be
understood in terms of conformally invariant objects and operations.

Indeed, this is the content of the following theorem. For now, we formulate
it only for the solutions to our equations on the sphere $S^n$, although our
discussion on the equations has concerned general conformal 
Riemannian manifolds.

 
\begin{theorem}\label{Thomas-formula}
 Let $ \mathcal{T} =  \mathcal{E}[1] \oplus T^*S^n [1] \oplus \mathcal{E}[-1]$
be the bundle of the Thomas' tractors on the conformal sphere. 
Define the following operator on $\mathcal{T}$
\begin{equation}\label{standard_tractor_connection}
     \nabla^{\mathcal{T}}_a \begin{pmatrix} \sigma \\ \mu_a \\ \rho 
\end{pmatrix} = 
     \begin{pmatrix} \nabla_a\sigma - \mu_a \\ 
     \nabla_a\mu_{b} + \mathbf{g}_{ab} \rho + P_{ab} \sigma \\
     \nabla_a\rho - P_{ab} \mu^b  \end{pmatrix} ,
 \end{equation}
 where $\nabla_a$ on the right-hand side refers to the Levi-Civita
 connection of the metric $\sigma^{-2}\mathbf{g}$. 

The operator $\nabla^{\mathcal{T}}$ is a linear connection on $\mathcal{T}$ 
which is conformally invariant. 
Moreover, solutions to \eqref{Einstein condition} are in bijective 
correspondence with parallel tractors, 
i.e. with sections $t \in \Gamma (\mathcal{T}) $ such that $\nabla^{\mathcal{T}}
t = 0$.
\end{theorem}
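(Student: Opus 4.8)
The plan is to establish the three assertions of the theorem in turn: that $\nabla^{\mathcal T}$ is a linear connection, that it is independent of the choice of scale within the conformal class, and that its parallel sections are precisely the Einstein scales. Most of the analytic work has already been done in the prolongation above, so the proof is largely a matter of reassembling the boxed equations \eqref{prolonged equation} and \eqref{differential consequence 2} into the single operator \eqref{standard_tractor_connection} and then checking the transformation behaviour under a change of scale.

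For the connection property I would observe that each row of \eqref{standard_tractor_connection} is the Levi-Civita derivative $\nabla_a$ of the corresponding component plus terms that are pointwise linear in $(\sigma,\mu_b,\rho)$. Hence $\nabla^{\mathcal T}_a=\nabla_a+\Phi_a$, where $\Phi_a$ is the $\operatorname{End}(\mathcal T)$-valued one-form
\[
\Phi_a=\begin{pmatrix} 0 & -\delta_a{}^b & 0\\ P_{ab} & 0 & \mathbf g_{ab}\\ 0 & -P_a{}^b & 0\end{pmatrix}.
\]
Since $\nabla_a$ acts diagonally through the induced connections on the weighted slots and satisfies the Leibniz rule, and since adding a bundle-endomorphism-valued one-form to a connection again yields a connection, $\nabla^{\mathcal T}$ is a linear connection on $\mathcal T$.

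The main obstacle is conformal invariance. Here I would fix a second scale $\hat g=\Omega^2 g$, set $\Upsilon_a=\Omega^{-1}\nabla_a\Omega$, and recall the standard transformation rules for the slots of a tractor, $\hat\sigma=\sigma$, $\hat\mu_a=\mu_a+\Upsilon_a\sigma$, $\hat\rho=\rho-\Upsilon^b\mu_b-\tfrac12\Upsilon^b\Upsilon_b\,\sigma$, together with the conformal changes of the Levi-Civita connection and of the Schouten tensor $\hat P_{ab}=P_{ab}-\nabla_a\Upsilon_b+\Upsilon_a\Upsilon_b-\tfrac12\Upsilon^c\Upsilon_c\,\mathbf g_{ab}$. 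The task is then to compute $\hat\nabla^{\mathcal T}_a(\hat\sigma,\hat\mu_b,\hat\rho)$ using the hatted data and to verify that the result is the image of $\nabla^{\mathcal T}_a(\sigma,\mu_b,\rho)$ under the same tractor transformation; the $\Upsilon$-terms must cancel row by row. This is direct but bookkeeping-heavy. Conceptually the cleaner route is to appeal to the abstract construction of Section~\ref{tractors-on-sphere}: on the sphere $\mathcal T=\Tilde G\times_G\mathbb T$ carries the connection induced by the Maurer-Cartan form, which is built from $\operatorname{O}(n+1,1)$-invariant data and is therefore automatically invariant under the full conformal group; one then only has to identify \eqref{standard_tractor_connection} as the component expression of that canonical connection in a chosen scale, after which invariance is immediate.

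Finally, for the bijection I would read the equation $\nabla^{\mathcal T}_a t=0$ off \eqref{standard_tractor_connection} component by component. The top row forces $\mu_a=\nabla_a\sigma$; substituting this into the middle row gives exactly \eqref{complete-eq}, whose symmetric trace-free part is the Einstein equation \eqref{Einstein condition}, whose trace part fixes $\rho=-\tfrac1n(\Delta\sigma+P\sigma)$, and whose antisymmetric part vanishes because the curvature acts trivially on densities; and the bottom row is the differential consequence \eqref{differential consequence 2}, which the prolongation already showed to be automatic. Thus the projection $t\mapsto\sigma$ sends parallel tractors to solutions of \eqref{Einstein condition}, while $\sigma\mapsto\bigl(\sigma,\nabla_a\sigma,-\tfrac1n(\Delta\sigma+P\sigma)\bigr)$ is its inverse, which establishes the claimed bijective correspondence.
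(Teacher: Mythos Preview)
Your proposal is correct and tracks the paper's overall strategy closely. The paper explicitly postpones the proof after stating the theorem, remarking that the connection property is immediate and that conformal invariance \emph{could} be verified by the direct ``pedestrian'' computation you outline but will instead follow from general machinery; the bijection with Einstein scales is exactly the prolongation argument you reconstruct.

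The one point of divergence is in how invariance is ultimately established. Your conceptual route appeals to the sphere-specific construction of \S\ref{tractors-on-sphere}: extend the Maurer-Cartan form to a principal connection on $\tilde G=G\times_P G$ and read off the induced connection on $\mathcal T=\tilde G\times_G\mathbb T$. The paper instead develops the full parabolic framework (Weyl connections, the Rho-tensor, the fundamental derivative) and then invokes the general formula of Theorem~\ref{derivatives-parabolic},
\[
(\nabla^{\mathcal V}_\xi v)_i = \nabla_\xi v_i + \Rho(\xi)\bullet v_{i-1} + \xi\bullet v_{i+1},
\]
which specialises to \eqref{standard_tractor_connection} once one computes the $\mathfrak g_{\pm1}$-actions on $\mathbb T$ and identifies $\Rho$ with the Schouten tensor. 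Your argument is quicker and perfectly adequate for the sphere; the paper's route costs more setup but yields the result uniformly for every Cartan geometry modelled on $G/P$, not just the flat model.
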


Notice that on the sphere, $\mathcal T$ is the trivial bundle $S^n\times
\mathbb T$ and we shall see soon that $\nabla^{\mathcal T}$ is the flat
connection there which we mentioned earlier. So we also postpone the proof
of this theorem. (Actually, we see immediately that this is a connection,
but we should check its curvature and, in particular, how it depends on the
choice of the fixed metric.)

Obviously, all the parallel
tractors are determined uniquely by their values in $\mathbb T$ in the origin. 
In particular, we managed to compute all Einstein metrics on the
conformal sphere.

\subsection{Conformal invariance}

What do we really mean when saying that objects or operations are \emph{conformally
invariant}?

The intuitively obvious answer should be that they are independent of our choice
of the metric in the conformal class. So we should start to look how the
covariant derivative changes if we change the scale. Consider the change
of our metric by taking $\hat g = \Omega^2 g$ with a positive smooth
function $\Omega$, and write $\Upsilon_a = \Omega^{-1}\nabla_a\Omega$.

 \begin{lemma}\label{lemma-trans}
Let $\hat\nabla$ be the Levi-Civita connection for the rescaled metric $\hat g$. 
Then for all $v\in\mathcal E^a$, $\alpha\in\mathcal E_a$, $\rho\in\mathcal
E[w]$
 \begin{align}
     \hat{\nabla}_a v^b & = \nabla_a v^b + \Upsilon_a v^b - \Upsilon^b v_a +
\Upsilon^c v_c \delta^b_a  \\ 
     \hat{\nabla}_a \alpha_b & = \nabla_a \alpha_b - \Upsilon_a\alpha_b - 
\Upsilon_b \alpha_a + \Upsilon^c \alpha_c g_{ab}
\\
\hat{\nabla}_a\rho &= \nabla_a\rho + w\Upsilon_a.
 \end{align}
 \end{lemma}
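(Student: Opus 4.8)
The plan is to use that any two torsion-free affine connections differ by a tensor. Writing $\hat\nabla_a v^b=\nabla_a v^b+Q_{ac}{}^b v^c$, the quantity $Q_{ac}{}^b$ is a genuine tensor which, because both $\nabla$ and $\hat\nabla$ are torsion-free, is symmetric in its lower indices, $Q_{ac}{}^b=Q_{ca}{}^b$. Applying the Leibniz rule to the contraction $\alpha_b v^b$, which is a weight-zero function on which both connections act simply as $\partial_a$, shows that the same tensor governs the action on one-forms with the opposite sign, $\hat\nabla_a\alpha_b=\nabla_a\alpha_b-Q_{ab}{}^c\alpha_c$, and hence on all tensors. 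Thus the whole lemma reduces to computing the single tensor $Q$.

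To find $Q$ I would impose metric compatibility of $\hat\nabla$ with $\hat g_{bc}=\Omega^2 g_{bc}$. Using $\nabla_a g_{bc}=0$ and $\Upsilon_a=\Omega^{-1}\nabla_a\Omega$ gives $\nabla_a\hat g_{bc}=2\Upsilon_a\hat g_{bc}$, so that $\hat\nabla_a\hat g_{bc}=0$ becomes, after cancelling the factor $\Omega^2$ and setting $Q_{abc}:=g_{cd}Q_{ab}{}^d$, the identity $Q_{abc}+Q_{acb}=2\Upsilon_a g_{bc}$. Running the usual Koszul cyclic-permutation argument on the three index rotations of this relation, and using the lower-index symmetry $Q_{abc}=Q_{bac}$, isolates $Q_{abc}=\Upsilon_a g_{bc}+\Upsilon_b g_{ac}-\Upsilon_c g_{ab}$, that is
\[ Q_{ab}{}^c=\Upsilon_a\delta_b^c+\Upsilon_b\delta_a^c-\Upsilon^c g_{ab}. \]

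Substituting this $Q$ into $\hat\nabla_a v^b=\nabla_a v^b+Q_{ac}{}^b v^c$ and into $\hat\nabla_a\alpha_b=\nabla_a\alpha_b-Q_{ab}{}^c\alpha_c$ yields the first two displayed formulas at once. For densities I would use the description of scales: the metric is $g=\sigma^{-2}\mathbf g$ for a section $\sigma$ of $\mathcal E[1]$, and since $\mathbf g$ is covariantly constant in every scale, the defining scale satisfies $\nabla_a\sigma=0$ (and likewise $\hat\nabla_a\hat\sigma=0$). As $\hat g=\Omega^2 g$ forces $\hat\sigma=\Omega^{-1}\sigma$, any $\rho\in\mathcal E[w]$ may be written $\rho=f\sigma^w=\hat f\hat\sigma^w$ with $\hat f=\Omega^w f$; differentiating the second expression with $\hat\nabla$, which annihilates $\hat\sigma$, and comparing with $\nabla_a\rho=(\partial_a f)\sigma^w$ gives $\hat\nabla_a\rho=\nabla_a\rho+w\Upsilon_a\rho$.

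The vector and one-form computations are entirely routine once $Q$ is known; the genuinely delicate step is the density case. There one must correctly identify how the Levi-Civita connection is induced on the weighted bundles $\mathcal E[w]$ and keep careful track of the sign conventions fixed earlier—both the minus sign in the infinitesimal central action and the resulting inverse relation $\hat\sigma=\Omega^{-1}\sigma$ between the two scales. Once the parallelism $\nabla_a\sigma=0$ of the scale defining $g$ is recognised, the rest is bookkeeping.
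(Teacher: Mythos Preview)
Your proof is correct and follows essentially the same route as the paper: both compute the difference tensor $\hat\Gamma-\Gamma$ from torsion-freeness and metric compatibility, the paper via the explicit coordinate Christoffel formula and you via the Koszul-style cyclic argument, arriving at the same $Q_{ab}{}^c=\Upsilon_a\delta_b^c+\Upsilon_b\delta_a^c-\Upsilon^c g_{ab}$. The only genuine difference is the density case: the paper handles it in one line by viewing the Christoffel difference as a $\mathfrak{gl}(n)$-valued one-form acting on the associated representation, whereas you trivialise $\mathcal E[w]$ by the scale $\sigma$ and differentiate directly---your version is more explicit and makes the sign conventions transparent, at the cost of a few extra lines.
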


 \begin{proof}
Recall the Christoffel symbols of Levi-Civita connection are expressed 
in any coordinates via the derivative of the metric coefficients 
(we write $\nabla_i$ for the partial derivatives here)
 \begin{equation}\label{Christoffels}
     \Gamma^i{}_{jk} = \frac{1}{2} g^{i\ell}( \nabla_k g_{\ell j} +  
\nabla_jg_{\ell k}  -  \nabla_{\ell}g_{jk} ) \ .
 \end{equation}
Conformal rescaling of the metric $g \mapsto \hat{g} =  \Omega^2 g $ 
affects all other objects derived from metric, e.g. the new inverse metric 
is $\hat{g}^{-1} = \Omega^{-2} g $. Thus, the Christoffels 
\eqref{Christoffels} change 
\begin{equation}
\begin{aligned} \hat{\Gamma}^i{}_{jk} &= \Gamma^i{}_{jk} + \frac{1}{\Omega} 
\left( \delta^i_j \nabla_k\Omega + \delta^i_k \nabla_j\Omega - g_{jk} 
\nabla^i \Omega\right)
\\
&= \Gamma^i{}_{jk} + \delta^i_j\Upsilon_k +
\delta^i_k\Upsilon_j - g_{jk}\Upsilon^i. 
\end{aligned}
 \end{equation}

Now recall, the covariant derivative is in coordinates given as the
directional derivative modified by the action of the Christoffels viewed as
$\mathfrak o(n)$-valued one-form. Thus the latter formula provides exactly
the three formulae in the statement.
\end{proof}

The formulae from the lemma allow to compute easily the changes of conformal
derivatives on all weighted tensor bundles. 

For example, considering possible first order operators on weighted forms 
$\mathcal E_a[w]$, we get
$$
\hat\nabla_a\alpha_b = \nabla_a \alpha_b + (w-1) \Upsilon_a\alpha_b - 
\Upsilon_b \alpha_a + \Upsilon^c \alpha_c g_{ab} ,
$$
and we immediately see that the antisymmetric part $\nabla_{[a}\alpha_{b]}$ is
invariant for the weight $w=0$ (this is the exterior differential on
one-forms), the trace-free part of the symmetrization
$\nabla_{(a}\alpha_{b)_0}$ is invariant for $w=2$ (we may view this as
the operator on the vector fields in $\mathcal E^a=\mathcal E_a[2]$ and the
kernel describes the conformal Killing vector fields),
and finally the trace $\nabla^a\alpha_a$ is invariant for $w=2-n$ 
(this is the divergence
of vector fields with weight $-n$).

Let us look at the geometric objects next. 
On the conformal sphere $S^n$, the category of natural objects was defined
in \ref{tractors-on-sphere} -- those are the homogeneous bundles
$G\times_P \mathbb V$ corresponding to any representation of $P$. If the
representation comes from a representation of $G_0=\operatorname{CO}(n)$,
extended by the trivial representation of $P_+=\operatorname{exp}\mathfrak
g_1$, the corresponding bundles extend to all conformal Riemannian
manifolds. Indeed, since general conformal Riemannian manifolds are 
given as reduction of
the linear frame bundles to the structure group $G_0$, such bundles 
are well defined on all of them. 

If we deal with more general $P$-representations, then we arrive at sums of
the latter bundles as soon as we fix a metric $g$ in the conformal class,
but the components are not given invariantly. We shall explain the general
procedure in the next lecture and, in particular, we shall see 
how this behavior extends
and defines such bundles on all conformal Riemannian manifolds. 
For now, just believe that in the case of the
Thomas' tractor bundles we face the following transformation rule
\begin{equation}\label{TTchange}
\begin{pmatrix}
\hat\sigma \\ \hat\mu_a \\ \hat \rho
\end{pmatrix}
=
\begin{pmatrix}
\sigma \\ \mu_a + \sigma\Upsilon_a \\ \rho - \Upsilon^c\mu_c -\frac12
\Upsilon^c\Upsilon_c\sigma
\end{pmatrix}
\end{equation}

Of course, a straightforward (and really tedious) 
computation can reveal that considering the formulae 
for the transformations of covariant derivatives from
the above Lemma and \eqref{TTchange}, the
linear tractor connection $\nabla^{\mathcal T}_a$ is a well defined
conformally invariant operator on the tractors. Fortunately, we do not have
to check this the pedestrian way and can wait for general
reasons.

 
\section{Parabolic geometries}
 
 We met the general Cartan geometries with the model $G/H$ 
in the Definition \ref{CG}. If the Lie group $G$ is semisimple and the
subgroup $H$ is a parabolic subgroup in $G$, we talk about the
\emph{parabolic geometries}. This class of Cartan geometries includes many
very important examples and provides a unified theory for all of them. In
this lecture we shall introduce some basic features and clarify many
phenomena in the conformal case on the way. Detailed exposition of the
background, including the necessary representation theory is available in
\cite{B}.

In general, the definition of the parabolic subgroups is a little subtle.
For us, the simplest approach is via their Lie algebras. The parabolic
ones are those which contain a Borel subalgebra and  
the choices of parabolic subalgebras $\mathfrak p\subset \mathfrak g$
correspond to graded decompositions of the semisimple Lie algebras 
$$
\mathfrak{g} = \mathfrak{g}_{-k} \oplus \dots \oplus \mathfrak{g}_k.
$$ 
This means that Lie brackets respect the grading, $[\mathfrak g_i,\mathfrak
g_j]\subset \mathfrak g_{i+j}$, and $\mathfrak p=\mathfrak g_0 \oplus
\mathfrak p_+=\mathfrak g_0 \oplus \mathfrak g_1\oplus\dots\oplus \mathfrak g_k$
is the decomposition into the reductive quotient $\mathfrak g_0$ and
nilpotent subalgebra $\mathfrak p_+$.
Moreover, there always is the unique \emph{grading element} $E$ in the
center of $\mathfrak g_0$ with the
property $[E,X]=jX$ for all $X\in\mathfrak g_j$.

The closed Lie subgroups $P\subset G$ are called parabolic if their algebras
$\mathfrak p=\operatorname{Lie}P$ are parabolic.

If $G$ is a complex semisimple Lie subgroup, then there is a nice geometric
description: $P\subset G$ is parabolic if and
only if $G/P$ is a compact manifold (and then it is a compact K\"ahler
projective variety),
see e.g., \cite[Section 1.2]{Zierau}. In
the real setting, the so called generalized flag varieties $G/P$ with
parabolic $P$ are always compact.

\subsection{$|1|$-graded parabolic geometries}
For the sake of simplicity, we shall restrict ourselves to the so called
$|1|$-graded cases here, i.e., $k=1$. 
Thus we shall deal with Lie groups with the algebras
 \begin{equation}\label{simple splitting of algebra}
     \mathfrak{g} = \mathfrak{g}_{-1} \oplus \underbracket{\mathfrak{g}_0 \oplus \mathfrak{g}_1 }_{\mathfrak{p}}
 \end{equation}
where $\mathfrak{p}$ refers to the parabolic subalgebra. 

Now, we consider Cartan geometries modelled on $G\to G/P$, i.e.  principal
$P$-bundles $\mathcal{G}\to M$ with Cartan
connections $\omega \in \Omega^1 (\mathcal{G}, \mathfrak{p})$. Such a
connection $\omega$ splits due to \eqref{simple splitting of algebra} as
 \[ \omega = \omega_{-1} \oplus \omega_0 \oplus \omega_1 . \] 
We shall further consider all reductions of the principal bundles $\mathcal
G$ to the structure group $G_0 = P/
\exp{\mathfrak{g}_1}$, i.e. we are interested in all equivariant mappings 
$$
\sigma \colon \mathcal{G}_0 =\mathcal G/\operatorname{exp}\mathfrak g_1\to 
\mathcal{G}
$$ 
with respect to the right
principal actions. The diagram below summarizes 
our situation (notice we are also fixing the subgroup $G_0$ in the semidirect
product $P= G_0\ltimes \operatorname{exp}\mathfrak g_1$, following the
splitting of the Lie algebra) 
 \[ 
 \begin{tikzcd}
 G_0\subset P \acts \mathcal{G} \ar[yshift=2pt]{r}{} & \mathcal{G}_0 \ar[yshift=-2pt]{l}{\sigma} \ar{r}{} & M .
 \end{tikzcd}
 \]

The Cartan connection $\omega$ allows us to identify the cotangent bundle $T^*M$
with $\mathcal G\times_P \mathfrak g_{1}$, where the action of $P_+$ is
trivial. Similarly $TM\simeq \mathcal G\times_P \mathfrak g/\mathfrak p$,
where $\mathfrak g/\mathfrak p\simeq \mathfrak g_{-1}$, again with trivial
action of $P_+$. The duality is provided by the Killing form on $\mathfrak
g$. 

Recall that all sections $\phi$ of associated bundles $\mathcal G\times_P\mathbb
V$ are identified with
equivariant functions $f:\mathcal G\to\mathbb V$, i.e. 
$\phi(x) = \abrack{u(x),f(u(x))}$
and so $$
f(u\cdot p) = p^{-1}\cdot f(u).
$$ 

Once we restrict the structure group to the reductive part of $P$, the 
pullback of the Cartan connection along $\sigma$ splits into the so called 
soldering form valued in $\mathfrak g_{-1}$, principal connection form valued
in $\mathfrak g_0$, and the one-form valued one-form 
$\operatorname{\operatorname{\Rho}}$ (which we shall see is the general
analog of the Schouten tensor $P_{ab}$ from conformal geometry) 
\[ 
\sigma^* \omega = \theta \oplus \sigma^* \omega_0 \oplus \operatorname{\Rho} . 
\] 

We can also take the other way round -- since $P_+$ is contractible (as the
exponential image of a nilpotent algebra), we may start 
with $\mathcal G=\mathcal G_0\times P_+$, fix one of the (reasonably
normalized) pullbacks $\sigma^*\omega_0$ and use some
suitable $\Rho$ to define the Cartan connection on the entire $\mathcal G$.
We shall see later, the Schouten tensor (with the opposite sign, see the
comment in the beginning of the second lecture) is the right choice to get the
normalized Cartan connection in the case of conformal Riemannian
geometries, taking one of the Levi-Civita connections for $\sigma^*\omega_0$. 
But we shall stay at the level of general Cartan connections now, 
so $\Rho$ is just the relevant pullback.

Two such reductions differ by a one form $\Upsilon$, viewed as equivariant
function $\Upsilon:\mathcal G_0\to \mathfrak g_1$:
$$
\hat \sigma = \sigma\cdot \operatorname{exp}\Upsilon
.$$

\subsection{Natural bundles and Weyl connections}
For each representation $\mathbb{V}$ of $P$ there is the functorial
construction of the bundles $\mathcal V=\mathcal G\times_P\mathbb V$ 
and the morphisms of the Cartan
geometries act on them in the obvious way. 

Actually we do not need the
Cartan connection for this definition, but notice that the morphisms of the
principal bundles respecting the Cartan connections are rather rigid in the
following sense. 
If we fix their projection to the base manifolds, 
the freedom in covering them is described by the kernel $K$ of the 
homogeneous models, i.e. the subgroup of $P$ acting
trivially on $G/P$, see \cite[section 1.5.3]{B}. This means two such 
morphisms may differ only by right principal action of elements from $K$.
Usually $K$ is trivial or discrete.  

Consider now a representation $\mathbb V$ of $P$ and its decomposition as a
$G_0$-module. The action of the
grading element $E\in \mathfrak g_0$ provides the splitting 
 \[ 
\mathbb{V} = \mathbb{V}_0 \oplus \dots \oplus \mathbb{V}_k , 
\] 
where the action of $\mathfrak g_1$ moves elements from $\mathbb V_i$ to $\mathbb
V_{i+1}$. Clearly, any section $v$ of $\mathcal V$
decomposes into the components $v_i:\mathcal G_0\to \mathbb V_i$ 
as soon as we fix our reduction. 

Further, fixing our reduction $\sigma$ we have the affine connection
$\omega_\sigma =
\sigma^{*} (\omega_{\leq 0} )$ (which is a Cartan connection, i.e., an
absolute parallelism, on the linear
frame bundle obtained as the sum of the soldering form and connection form).
As well known, the corresponding covariant derivative is obtained via the
constant vector fields:
$$
\nabla^\sigma_{\xi} v (u) = \omega^{-1}_{\sigma} (X) \cdot v (u)
$$ 
where $X\in \mathfrak g_{-1}$ corresponds
to the vector $\xi$ in a frame $u\in \mathcal G_0$, i.e., we simply
differentiate a function in the direction of a vector 
(the horizontal lift of
$\xi$ to $\mathcal G_0$). Notice that this connection $\nabla^\sigma$ always
respects the decomposition of $\mathcal V$ given by the same reduction. We call
all these connections the \emph{Weyl connections} (and we obtain the genuine
Weyl connections in the conformal case with the Schouten tensor $-\Rho$, i.e.
all the torsion free connections preserving the conformal Riemannian
structure). 

Our next theorem says, how the splitting of $\mathbb V$, the covariant
derivative, and also the one-form $\Rho$ change if we change the reduction
$\sigma$. 

In order to formulate the results, let us introduce some further
conventions. Recall, tangent vectors $\xi\in T_xM$ can be identified with
right-equivariant functions $X$ on the frames over $x$ valued in $\mathfrak
g_{-1}=\mathfrak g/\mathfrak p$. This identification can be written down
with the help of the Cartan connection, $X=\omega_{-1}(u)(\tilde \xi)$ 
for any lift $\tilde \xi$ of $\xi$ to
$T_u\mathcal G$. By abuse of notation we shall write the same symbol 
$\xi$ for the
vector in $TM$ and the corresponding element $X$ in $\mathfrak g_{-1}$.
Similarly we shall deal with the one forms $\Upsilon$ represented by
elements in $\mathfrak g_1$, 
and also the endomorphisms of $TM$ represented
by elements in $\mathfrak g_0$. 

For instance,
$\operatorname{ad}\Upsilon(\xi)\cdot v$ means we take the Lie algebra valued 
functions $\Upsilon$ and $\xi$, take the Lie bracket of their values 
and act by the result
on the value of the function $v$ via the representation of $\mathfrak g_0$ 
in question. Of
course, we may use only such operations which ensure the necessary
equivariance (which is guaranteed when taking the adjoint action within the
Lie algebra).
 

\begin{theorem}\label{transT}
Consider $\hat{\sigma} = \sigma \cdot \exp{\Upsilon}$ and use the hat to
indicate all the transformed quantities. For every section
$v=v_0\oplus\dots\oplus v_k$ in the representation space of the
representation $\lambda:\mathfrak p\to \mathfrak{gl}(\mathbb V)$, 
and vector $\xi$ in the tangent bundle, 
\begin{equation}\label{p-action}    
\hat{v}_\ell = (\lambda(\operatorname{exp}(-\Upsilon))(v))_{\ell}=\sum_{i+j = \ell} \frac{(-1)^i}{i!} \lambda (\Upsilon)^i (v_j)
.
\end{equation}
If $\lambda$ is a completely reducible $P$-representation, then
\begin{equation}\label{conn-trans}
\hat{\nabla}_{\xi} v = \nabla_{\xi} v - \operatorname{ad} \Upsilon (\xi) 
\cdot v
.\end{equation} 
Finally, the one-form $\Rho$ transforms
\begin{equation} \label{Rho-trans}
\hat\Rho = \Rho (\xi) + \nabla_{\xi} \Upsilon + \frac{1}{2} 
(\operatorname{ad} \Upsilon )^2(\xi)
.\end{equation}
\end{theorem}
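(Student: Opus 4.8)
The plan is to reduce all three formulae to a single gauge-transformation identity for the Cartan connection $\omega$ under the change of reduction $\hat\sigma=\sigma\cdot\exp\Upsilon$, and then to read the three statements off by decomposing that identity according to the grading $\mathfrak g=\mathfrak g_{-1}\oplus\mathfrak g_0\oplus\mathfrak g_1$. Throughout I would exploit the $|1|$-graded structure: $\mathfrak g_1$ is abelian, since $[\mathfrak g_1,\mathfrak g_1]\subset\mathfrak g_2=0$, and $\operatorname{ad}\Upsilon$ with $\Upsilon\in\mathfrak g_1$ raises the grading by one, so that $(\operatorname{ad}\Upsilon)^3=0$ on all of $\mathfrak g$ and only finitely many terms ever survive.

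First I would settle \eqref{p-action}, which needs no connection. A section is an equivariant function $f\colon\mathcal G\to\mathbb V$ with $f(u\cdot p)=\lambda(p^{-1})f(u)$, and its components $v_\ell$ arise by restricting $f$ along the chosen reduction and splitting by the eigenvalues of the grading element $E$ acting through $\lambda$. Evaluating along $\hat\sigma=\sigma\cdot\exp\Upsilon$ and using equivariance gives the pointwise identity $\hat v=\lambda(\exp(-\Upsilon))v=\exp(-\lambda(\Upsilon))v$. Expanding the exponential and recalling that $\lambda(\Upsilon)$ shifts $\mathbb V_j$ into $\mathbb V_{j+1}$, the $\mathbb V_\ell$-component is precisely the stated sum over $i+j=\ell$.

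The analytic heart is the gauge identity. Using only the two defining properties of a Cartan connection (equivariance $(r^h)^*\omega=\operatorname{Ad}(h^{-1})\omega$ and reproduction of the fundamental fields), the pullback along $\hat\sigma=\sigma\cdot\exp\Upsilon$ should satisfy
\[
\hat\sigma^*\omega=\operatorname{Ad}(\exp(-\Upsilon))\circ\sigma^*\omega+(\exp\Upsilon)^*\omega_P,
\]
with $\omega_P$ the left Maurer-Cartan form of $P$; because $\mathfrak g_1$ is abelian the logarithmic derivative collapses to $(\exp\Upsilon)^*\omega_P=\operatorname{d}\Upsilon$. Writing $\sigma^*\omega=\theta\oplus\sigma^*\omega_0\oplus\Rho$ and sorting by degree, the surviving contributions are $\hat\theta=\theta$ in degree $-1$, the connection change $\hat\sigma^*\omega_0=\sigma^*\omega_0-[\Upsilon,\theta]$ in degree $0$, and $\hat\Rho=\Rho-[\Upsilon,\sigma^*\omega_0]+\tfrac12(\operatorname{ad}\Upsilon)^2\theta+\operatorname{d}\Upsilon$ in degree $1$.

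From here the last two formulae fall out. For \eqref{conn-trans} I would note that complete reducibility forces $P_+$, and hence $\lambda(\mathfrak g_1)$, to act trivially, so by \eqref{p-action} the representative function is unchanged, $\hat v=v$; writing the Weyl derivative for an arbitrary lift $\eta$ of $\xi$ as $\nabla_\xi v=\eta(v)+\lambda(\sigma^*\omega_0(\eta))\cdot v$ and inserting $\hat\sigma^*\omega_0(\eta)-\sigma^*\omega_0(\eta)=-[\Upsilon,\theta(\eta)]$ yields $\hat\nabla_\xi v=\nabla_\xi v-\lambda([\Upsilon,\xi])v=\nabla_\xi v-\operatorname{ad}\Upsilon(\xi)\cdot v$. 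For \eqref{Rho-trans} I would evaluate the degree-$1$ line on the $\sigma$-horizontal lift $\tilde\xi$, where $\sigma^*\omega_0(\tilde\xi)=0$; this annihilates the middle term and identifies $\operatorname{d}\Upsilon(\tilde\xi)$ with $\nabla_\xi\Upsilon$, leaving exactly $\hat\Rho=\Rho(\xi)+\nabla_\xi\Upsilon+\tfrac12(\operatorname{ad}\Upsilon)^2(\xi)$. The main obstacle I expect is the honest justification of the gauge identity for a Cartan (rather than principal) connection and of the collapse $(\exp\Upsilon)^*\omega_P=\operatorname{d}\Upsilon$, together with the bookkeeping of which horizontal lift enters where: the covariant-derivative formula demands the $\mathfrak g_0$-horizontal lift of the un-hatted connection, and one must check that evaluating the form identity on that specific lift is legitimate and produces $\nabla_\xi\Upsilon$ rather than its hatted counterpart. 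Everything else is grading bookkeeping kept short by $\mathfrak g_2=0$.
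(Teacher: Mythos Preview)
Your approach is correct and rests on the same underlying computation as the paper's: both decompose $T_u\hat\sigma\cdot\tilde\xi$ via the Leibniz rule for the principal right action (the paper's equation \eqref{trans-eq}) and then invoke the $\operatorname{Ad}$-equivariance of $\omega$. The difference is organizational. You apply $\omega$ once to that decomposition to obtain the full gauge identity $\hat\sigma^*\omega=\operatorname{Ad}(\exp(-\Upsilon))\circ\sigma^*\omega+\operatorname{d}\Upsilon$ on $\mathcal G_0$, and then read off all three claims by projecting to the graded pieces; the paper instead projects immediately to the $\mathfrak g_0$-component to obtain \eqref{conn-trans} and does not prove \eqref{Rho-trans} at all, deferring that to an external reference. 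Your route thus yields a uniform argument and an honest proof of the $\Rho$-transformation that the paper omits. The worry you flag at the end dissolves: $\hat\Rho=\hat\sigma^*\omega_1$ is genuinely horizontal on $\mathcal G_0$, since on a vertical vector $\zeta_Z$ with $Z\in\mathfrak g_0$ the two non-horizontal pieces cancel, $-[\Upsilon,\sigma^*\omega_0(\zeta_Z)]=-[\Upsilon,Z]$ against $\operatorname{d}\Upsilon(\zeta_Z)=[\Upsilon,Z]$ by the $G_0$-equivariance of $\Upsilon$; hence evaluating on the $\sigma^*\omega_0$-horizontal lift is legitimate and produces the un-hatted $\nabla_\xi\Upsilon$, exactly as you need.
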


\begin{proof}
The formula \eqref{p-action} is just a direct consequence of our 
definitions and
reflects the fact that by changing the reduction $\sigma$, the equivariant function
$v:\mathcal G\to \mathbb V$ is restricted to another subset, shifted 
by the right action of $\operatorname{exp}(\Upsilon)$. Thus, the values have to get
corrected by the action of
$(\operatorname{exp}\Upsilon)^{-1}=\operatorname{exp}(-\Upsilon)$. The
formula then follows by collecting the terms with the right homogeneities.   

The transformation of the derivative is also not too difficult. Consider a
section $v:\mathcal G_0\to \mathbb V$ and  
recall $\nabla_\xi v$ is given with the help of any lift  
$\tilde\xi$ to $\mathcal G_0$:
\begin{equation}\label{Weyl-derivative}
\nabla_\xi v=\tilde\xi\cdot v (u) - \omega_0(T_u\sigma\cdot \tilde
\xi)\cdot v(u)
.\end{equation}
Writing $r^p=r(\ ,p)$ and $r_u=r(u,\ )$ for the right action,
\begin{equation}\label{trans-eq}
T_u\hat\sigma\cdot \tilde\xi = T_{\sigma(u)}r^{\operatorname{exp}\Upsilon(u)}
\cdot T_u\sigma\cdot \tilde \xi +
T_{\operatorname{exp}\Upsilon(u)}r_{\sigma(u)}\cdot 
T_u{\operatorname{exp}\Upsilon}\cdot \tilde \xi
.\end{equation}
The second term in \eqref{trans-eq} is vertical in $\mathcal G\to \mathcal G_0$ and thus 
$$
\omega_0(T_u\hat\sigma\cdot \tilde\xi) = 
\omega_0(T_{\sigma(u)}r^{\operatorname{exp}\Upsilon(u)}
\cdot T_u\sigma\cdot \tilde \xi )
.$$
By equivariancy of the Cartan connection $\omega$, this equals to the
$\mathfrak g_0$ component of
$\operatorname{Ad}((\operatorname{exp}\Upsilon(u))^{-1})(\omega(T_u\sigma\cdot
\tilde\xi))$. Now, notice $\omega_{-1}(T_u\sigma\cdot \tilde\xi)$ is exactly
the coordinate function representing the vector $\xi$. 
Thus, the only $\mathfrak g_0$ component of 
the latter expression is
$\operatorname{ad}(-\Upsilon(u))(\xi)$ and this has to act on $v$ in our
transformation formula.  

The transformation of the $\Rho$ tensor is also deduced from
\eqref{trans-eq}, but it is more technical and we refer to the detailed proof
in \cite[section 5.1.8]{B}.
\end{proof}

Similar formulae are available for general parabolic geometries and their
Weyl connections. Just the non-trivial gradings of $TM$ and $T^*M$ make them
much more complicated. The complete exposition can be read from
\cite[sections 5.1.5 through 5.1.9]{B}. 

Notice also that we are allowing all reductions $\sigma$. 
But some of them are nicer
than others -- we may reduce the structure group to the semisimple part
$G_0^{ss}$ of
$G_0$. These further reductions correspond to sections of the line bundle
$\mathcal L=\mathcal G_0/G_0^{ss}$, which can be viewed as the associated
bundle $\mathcal G_0\times_{G_0}\operatorname{exp}\{wE\}$ carrying the natural
structure of a principal bundle with structure group $\mathbb R_+$. 
This is
the line bundle of scales and its sections correspond to Weyl connections
inducing flat connections on $\mathcal L$. In the conformal case, these are
just the choices of
metrics in the conformal class. The induced connection on $\mathcal L$ has
got the antisymmetric part of $\Rho$ as its curvature and thus, 
we can recognize such more special 
reductions by the fact that for these the Rho-tensor is symmetric.

\subsection{Higher order derivatives}
Notice, in Theorem \ref{transT} we provided the formula for the change of the Weyl connections for completely reducible $P$-modules only. This is
because the formulae get very nasty for modules with nontrivial $\mathfrak
g_1$ actions. But even dealing with tensorial bundles, iterating the
derivatives always leads to such modules. 

In order to avoid at least part of these hassles, we should seek for better
linear connections related to our reductions $\sigma$ and the fixed Cartan
connection $\omega$. An obvious choice
seems to be the following one. Fixing a reduction $\sigma$ consider the
principal connections $\mathcal G$ with the connection form
$\gamma^\sigma\in\Omega^1(\mathcal G,\mathfrak p)$,
$$
\gamma^\sigma(\sigma(u)\cdot g)(\xi) = \omega_{\mathfrak
p}(\sigma(u))(Tr^{g^{-1}}\cdot
\xi)
$$
for all $u\in\mathcal G_0$, $\xi\in T_{\sigma(u)\cdot g}$, $g\in P_+$. In
words, we restrict the $\mathfrak p$-component of $\omega$ to the image of
$\sigma$ and extend it the unique way to a principal connection form.

Clearly, this connection form defines the associated linear connections on
all natural bundles, we call them the \emph{Rho-corrected Weyl
connections $\nabla^{\Rho}$}. They were perhaps first introduced in
\cite{PG-Adelaide} and exploited properly in \cite{Rho-corrected}.
In the case of conformal Riemannian structures, these concepts are closely
related to the so called W\"unsch's conformal calculus, cf. \cite{Wunsch}.

\begin{theorem}\label{Rho-corrected}
Consider natural bundle $\mathcal V = \mathcal G\times_P \mathbb V$ and a
reduction with the Weyl connection $\nabla$ and its Rho-corrected derivative
$\nabla^\Rho$. Then
\begin{align}
\label{formula-Rhocorrected}
\nabla^\Rho_\xi v &= \nabla_\xi v + \Rho(\xi)\cdot v
\\
\hat\nabla^\Rho_\xi v &= \nabla^\Rho_\xi v +
\sum_{i\ge1}\frac{(-1)^i}{i!}(\operatorname{ad}\Upsilon)^i(\xi)\cdot v
.\end{align}
\end{theorem}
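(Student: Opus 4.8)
The plan is to treat the two assertions separately, deriving the first directly from the definition of $\gamma^\sigma$ and obtaining the second from the general principle that two principal connections on one and the same bundle differ by a tensorial one-form. For the first formula I would start from the observation that the Rho-corrected derivative is computed by exactly the recipe \eqref{Weyl-derivative} for the Weyl connection, but with the connection form $\omega_0$ replaced by the full $\mathfrak p$-component $\omega_{\mathfrak p}=\omega_0\oplus\omega_1$. Indeed, evaluating the defining formula for $\gamma^\sigma$ at $g=e$ shows that on the image of $\sigma$ one has $\gamma^\sigma=\omega_{\mathfrak p}$. Lifting a section $v$ through $\sigma$ and differentiating therefore reproduces $\nabla_\xi v$ coming from the $\mathfrak g_0$-part, plus the contribution of the $\mathfrak g_1$-part $\omega_1$. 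Since $\sigma^*\omega_1=\Rho$ by the very definition of the Rho-tensor, this extra contribution is precisely $\Rho(\xi)\cdot v$, which is \eqref{formula-Rhocorrected}.

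For the transformation rule I would exploit that $\gamma^\sigma$ and $\gamma^{\hat\sigma}$ are both principal connection forms on the single $P$-bundle $\mathcal G$, so their difference is a horizontal, $\operatorname{Ad}$-equivariant $\mathfrak p$-valued one-form; consequently $\hat\nabla^\Rho_\xi v-\nabla^\Rho_\xi v$ is the \emph{algebraic} action of a single element of $\mathfrak p$ on $v$, valid for every representation and not only the completely reducible ones as in \eqref{conn-trans}. To identify this element I would evaluate both forms at a point $\hat\sigma(u)=\sigma(u)\cdot\operatorname{exp}\Upsilon(u)$ on the $\gamma^{\hat\sigma}$-horizontal lift of $\xi$, whose $\omega$-value is the purely $\mathfrak g_{-1}$-valued representative of $\xi$. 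Transporting this vector back to $\sigma(u)$ by the pointwise right action of $\operatorname{exp}(-\Upsilon(u))$ and using the equivariance $(r^h)^*\omega=\operatorname{Ad}(h^{-1})\circ\omega$ of the Cartan connection, the relevant $\omega$-value becomes $\operatorname{Ad}(\operatorname{exp}\Upsilon)(\xi)$; projecting onto $\mathfrak p$ (since $\gamma^\sigma$ restricts to $\omega_{\mathfrak p}$ on the image of $\sigma$) and conjugating back by $\operatorname{Ad}(\operatorname{exp}(-\Upsilon))$ yields the sought element. In the $|1|$-graded setting the resulting $\operatorname{Ad}$-series truncates after the quadratic term, and a short computation collapses it to $\sum_{i\ge1}\tfrac{(-1)^i}{i!}(\operatorname{ad}\Upsilon)^i(\xi)$, matching the stated formula up to the sign conventions fixed in \eqref{Weyl-derivative}.

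The step I expect to be the main obstacle is making transparent why no derivative $\nabla_\xi\Upsilon$ of the gauge parameter survives, in sharp contrast to the Rho-transformation \eqref{Rho-trans} and to what a naive substitution of \eqref{conn-trans} and \eqref{Rho-trans} into \eqref{formula-Rhocorrected} would suggest. The point is that comparing the two connection \emph{forms} at a fixed point only ever uses the pointwise group value $\operatorname{exp}\Upsilon(u)$ together with equivariance, so $\Upsilon$ is never differentiated; equivalently, the difference of two principal connections is automatically tensorial and hence purely algebraic. The delicate bookkeeping is therefore to separate, in the lift $T\hat\sigma\cdot\tilde\xi$, the genuinely horizontal part that carries the $\operatorname{Ad}$-conjugation from the vertical $T\operatorname{exp}\Upsilon$-part that would produce $\nabla_\xi\Upsilon$ but drops out once one works with the horizontal lift, and to track the grading carefully enough to justify the truncation to the stated finite sum.
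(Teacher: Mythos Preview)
Your proposal is correct and follows essentially the same approach as the paper: both identify $\gamma^\sigma|_{\operatorname{im}\sigma}=\omega_{\mathfrak p}$ for the first formula and, for the second, compare the two principal connection forms on a suitable horizontal lift using the $\operatorname{Ad}$-equivariance of $\omega$. The only difference is that the paper works with the $\gamma^\sigma$-horizontal lift at $\sigma(u)$ rather than the $\gamma^{\hat\sigma}$-horizontal lift at $\hat\sigma(u)$, so it reads off $(\operatorname{Ad}(\exp(-\Upsilon))\xi)_{\mathfrak p}$ directly and is spared the back-conjugation you anticipate; your discussion of why no $\nabla_\xi\Upsilon$ term survives is a helpful clarification the paper leaves implicit.
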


\begin{proof}
In order to see the difference between $\nabla$ and $\nabla^\Rho$, we can
inspect the expression \eqref{Weyl-derivative} with the choice of the
horizontal vector field $\tilde \xi$ lifting $\xi$. Thus $\nabla_\xi v=
\tilde\xi\cdot v$. On the other hand, choosing the lift 
$T\sigma\cdot\tilde\xi$ on $\mathcal G$, we obtain 
$\omega_0(T\sigma\cdot\tilde\xi)=0$ and $\omega_1(T\sigma\cdot\tilde\xi)$
represents $\Rho(\xi)$. Equivariancy of $v$ then implies our formula
\eqref{formula-Rhocorrected} along the entire image of $\sigma$.

Let us now consider the horizontal lift $\tilde \xi$ of 
$\xi$ on $\mathcal G$ with
respect to $\gamma^\sigma$. Then $\nabla^\Rho_\xi v$ is represented by
$\tilde\xi\cdot v$, while $\tilde\xi\cdot v +
\gamma^{\hat\sigma}(\tilde\xi)\cdot v$ represents $\hat\nabla_\xi v$. By the
very definition, $\omega(\sigma(u))(\tilde \xi)\in \mathfrak g_{-1}$. Thus,
$$
\gamma^{\hat\sigma}(\sigma(u))(\tilde\xi) = \omega_{\mathfrak
p}(Tr^{\operatorname{exp}\Upsilon(u)}\cdot \tilde\xi(\sigma(u)))
,$$ 
which is just the component of
$\operatorname{Ad}((\operatorname{exp}\Upsilon(u))^{-1})
(\omega(\sigma(u))(\tilde\xi))$. Now, notice that 
$\omega(\sigma(u))(\tilde\xi)$
represents $\xi$ by values in $\mathfrak g_{-1}$ and the requested formula
follows.
\end{proof}

We should notice that the Weyl connections and the Rho
corrected ones coincide on bundles coming from representations with trivial
action of $P_+$. Of course, the transformation formulae coincide in this
case, too.

\subsection{A few examples} 

We shall go through a few homogeneous models and comment on the general
`curved' situations. In all cases the actual geometric structures are given
by the reductions of the linear frame bundles and the construction of the
right Cartan geometry is a separate issue. We shall come back to this in the
fifth lecture and work with the general choices of the Cartan connections
$\omega$ here.

\smallskip

\noindent\textbf{Conformal Riemannian geometry.}
The relevant Cartan geometry can be modelled by the choice  
$G = \operatorname{O}(n+1, 1,\mathbb{R})$ (there is some
freedom in the choice
of the group with the given graded Lie algebra $\mathfrak g$) and the parabolic subgroup $P$ as
we saw in detail in the first lecture. 
%

It is a simple exercise now to recover the formulae from Lemma
\ref{lemma-trans} by computing the brackets in the Lie algebra. In our
conventions using the coordinate functions instead of fields, we can rewrite
them as (notice $\alpha$ is valued in $\mathfrak g_1$, while $\eta$, $\xi$
have got values in $\mathfrak g_{-1}$, and $s$ sits in $\mathbb R[w]$) 
\begin{align}
\hat \nabla_\xi\eta &= \nabla_\xi\eta - [\Upsilon,\xi]\cdot \eta
\\
\hat\nabla_\xi\alpha &= \nabla_\xi\alpha -[\Upsilon,\xi]\cdot \alpha
\\
\hat\nabla_\xi s &= \nabla_\xi s - (-\Upsilon(\xi))w s 
\end{align}
where we have picked up just the central component of the bracket in the
last line, viewed as the multiple of the grading element.

Another, but still much more tedious exercise would be to check the
conformal invariance of the tractor connection on $\mathcal T$. We shall
develop much better tools for that in the next lecture.

We shall also enjoy much better tools to discuss second or higher order
operators. For example, considering second order operators on densities
$s\in\mathcal E[w]$, we may iterate the Rho-corrected
derivative to obtain
$$
\mathbf g^{ab}\nabla^\Rho_a\nabla^\Rho_b s = \nabla^a\nabla_a s - w \mathbf
g^{ab}\Rho_{ab} s
$$  
and check that this gets an invariant operator for $w=1-\frac n2$, which is
the famous conformally invariant Laplacian, the so called Yamabe operator
$$
Y:\mathcal E[1-\frac n2]\to \mathcal E[-1-\frac n2]
.$$

\smallskip
\noindent\textbf{Projective geometry.} The choice of the homogeneous 
model is obtained from the algebra of trace free real matrices 
$\mathfrak g = \mathfrak{sl}(n+1,\mathbb{R})$ 
with the grading
 \[ 
    \left(
        \begin{array}{c|c}
        \mathfrak{z} & \mathbb{R}^{n*} \\
        \hline
        \mathbb{R}^n & \mathfrak{gl} (n,\mathbb{R}) 
        \end{array}
    \right)
    \begin{matrix}
    1 \\ 
    n
    \end{matrix}
 \] 
Here $\mathfrak{z}=\Bbb R$ is the center, the grading element $E$
corresponds to
$\frac n{n+1}$ and $-\frac1{n+1} \operatorname{id}_{\mathbb R^n}$ on the diagonal. 
We may take $G=\operatorname{SL}(n+1,\mathbb R)$ and $P$ the subgroup of
block upper triangular matrices. The homogeneous model is then 
the real projective space $G/P = \mathbb{R} \mathbb{P}^n$. On the
homogeneous model, the Weyl connections transform as
$$
\hat \nabla_\xi\eta = \nabla_\xi\eta + \Upsilon(\eta)\xi + \Upsilon(\xi)\eta
$$
and so they clearly share the geodesics.

For general projective structures on manifolds $M$, the 
space of Weyl connections has to be chosen as a class of all affine
connections sharing geodesics with a given one and they transform then the
same way. We shall see, that projective geometries are rare exceptions of
parabolic geometries not given by a first order structure on the manifold.

The analog of the Thomas' tractors is the natural bundle corresponding to
the standard representation of $\operatorname{SL}(n+1,\mathbb R)$ on
$\mathbb T=\mathbb R^{n+1}$. The injecting part of $\mathcal T$ is the line 
bundle $\mathcal T^1$ with the action of the grading element by $\frac
n{n+1}$. The usual convention says this is the line bundle $\mathcal E[-1]$.
Then the projecting component is the weighted tangent bundle $TM[-1]$. 

\smallskip\textbf{Almost Grassmannian geometry.} This is essentially a
continuation of the previous example. We take $G=\operatorname{SL}(p,q)$ and
the splitting of the matrices into blocks of sizes $p$ and $q$, say 
$2\le p\le q$. Unlike the projective case, here the geometry is determined
by reducing the structure group of the tangent bundle to
$\mathbb R\times \operatorname{SL}(p,\mathbb
R)\times\operatorname{SL}(q,\mathbb R)$. This corresponds to identifying
the tangent bundle with the tensor product of the auxiliary bundles $\mathcal
V^*$ and $\mathcal W$ of dimensions $p$ and $q$, together with the
identification of their top degree forms $\Lambda^p\mathcal V\simeq
\Lambda^q\mathcal W^*$. 

Thus, we may use the abstract indices and write $\mathcal V=\mathcal
E^{A}$, $\mathcal W=\mathcal E^{A'}$. Then the tangent bundle is 
$\mathcal E^{A'}_A$ and the formula for the brackets in the Lie algebra says
$[[\Upsilon,\xi],\eta]^{A'}_A= -\xi^{A'}_B\Upsilon^B_{B'}\eta^{B'}_A - 
\xi^{B'}_A\Upsilon^B_{B'}\eta^{A'}_B$. 
The Weyl connections are tensor products of connections on $\mathcal V^*$
and $\mathcal W$ (but not all of them).
The right formula for the change of the Weyl connections is
$$
\hat \nabla^A_{A'}\eta^{B'}_B = \nabla^A_{A'}\eta^{B'}_B +
\delta^{B'}_{A'}\Upsilon^A_{C'}\eta^{C'}_B +
\delta^{A}_{B}\Upsilon^C_{A'}\eta^{B'}_C
.$$ 

The analog to the Thomas' tractors comes from the standard representation of
$G$ on $\mathbb T=\mathbb R^{p+q}=\mathcal V \oplus \mathcal W$. 
Thus, fixing a Weyl connection, we get
the tractors as couples $(v^A, w^{A'})$ with the transformation rules 
$$
\hat v^A = v^A-\Upsilon^A_{B'}w^{B'}, \quad \hat w^{A'} = w^{A'} 
.$$ 

Notice the special case $p=q=2$ which provides (the split real form of) 
the Penrose's spinor presentation of tangent bundle and the two-component
four-dimensional twistors $\mathcal T$. Indeed, $\mathfrak{so}(6,\mathbb
C)=\mathfrak{sl}(4,\mathbb C)$ and $\mathfrak{so}(4,\mathbb C)$ splits into
sum of two $\mathfrak{sl}(2,\mathbb C)$ components. Thus, up to the choice
of the right real form, the almost Grassmannian geometries with $p=q=2$
correspond to the four-dimensional conformal Riemannian geometries.

The twistor parallel transport (connection) is then given by the
formula
$$
(\nabla^{\mathcal T})^A_{A'}\begin{pmatrix} v^B \\ w^{B'} \end{pmatrix}
= \begin{pmatrix} \nabla^A_{A'}v^B + \Rho^{AB}_{A'C'}w^{C'} \\ 
\nabla^A_{A'} w^{B'} + \delta^{B'}_{A'}v^A \end{pmatrix}
$$
and we shall see that this is the right formula for the standard tractor
connection for the almost Grassmannian geometries in all dimensions.

\smallskip

The reader can find many further explicit examples in the last two chapters
of \cite{B}, including those with nontrivial gradings on $TM$.





 
 \section{Elements of tractor calculus}

In order to show how simple and general the basic functorial constructions
and objects are, we shall focus for a while on general Cartan geometries with 
Klein models $G\to G/H$ without any further assumptions. But we shall come
back to the parabolic and, in particular, conformal geometries in the end 
of this lecture.

\subsection{Natural bundles and tractors}
Let us come back to the functorial constructions on homogeneous spaces $G\to
G/H$ mentioned in the first lecture. As always, 
$\mathfrak h\subset\mathfrak g$ are the Lie algebras of $H$ and $G$.
%

For any Klein geometry $G/H$, there is the category of the homogeneous
vector bundles, where the objects are the associated bundles $\mathcal
V=G\times_H \mathbb V$.
All morphisms on $G/H$ are the
actions of elements of $G$ and these are mapped to the obvious actions on
$\mathcal V$. Further morphisms in this category are 
the linear mappings intertwining the actions of the elements of $G$. 

Clearly, there is the 
functor from the category of $H$-modules mapping the modules 
$\mathbb V$ to the
associated bundles $\mathcal V=G\times_H \mathbb V$, while any module
homomorphism $\phi:\mathbb V\to \mathbb W$ provides 
the morphisms $\abrack{u,v}\mapsto
\abrack{u,\phi(v)}$ between these bundles. 
 
The latter functorial construction extends obviously to the entire 
category $\mathcal{C}_{G/H}$ of all Cartan geometries modelled on 
$G/H$. The morphisms have to respect the Cartan connections $\omega$ on the
principal fiber bundles. 
 
In this setting, a natural bundle is a functor $\mathcal{V}
\colon \mathcal{C}_{G/H} \rightarrow \mathcal{VB}$ valued in the category of
vector bundles.  The functor sends
every Cartan geometry $(\mathcal{G} \rightarrow M, \omega) $ to the vector
bundle $\mathcal{V} M \rightarrow M$ over the same base (so it is a special
case of the so called gauge-natural bundles, see \cite{KMS}). Moreover,
$\mathcal{V}$ has the property that whenever there is a morphism between
objects of $\mathcal{C}_{G/H}$, $\Phi \colon (\mathcal{G} \rightarrow M,
\omega) \rightarrow (\tilde{\mathcal{G}} \rightarrow \tilde{M},
\tilde{\omega})$ covering $f \colon M \rightarrow \tilde{M}$, then there is
the corresponding vector bundle morphism $\mathcal{V} \Phi \colon \mathcal{V} M
\rightarrow \mathcal{V} \tilde{M} $ covering $f$.  This is just an explicit
description of the functoriality property with respect to the category of
Cartan geometries.
The main point is that each representation of $H$ produces such a functor
for all general Cartan geometries of the given type $G/H$. 

At the
same time, the Maurer-Cartan equation $\operatorname{d}\omega
+\frac12[\omega,\omega]=0$, valid on the homogeneous model, 
is no more true in general and we
obtain the definition of the \emph{curvature} $\kappa$ of the Cartan geometries $(\mathcal
G\to M, \omega)$ instead:
\begin{equation}\label{curvature}
\kappa = \operatorname{d}\omega + \frac12[\omega,\omega]
.\end{equation}

The fundamental Theorem \ref{FTC} immediately reveals that a general Cartan
geometry is locally isomorphic to its homogeneous model, if and only if
its curvature vanishes identically. 

We should also notice that there is the projective component of the curvature in
$\mathfrak g/\mathfrak h$ which we call the \emph{torsion}. Thus, the Cartan geometry
is \emph{torsion-free} if the values of its curvature $\kappa$ are in
$\mathfrak h$. We shall see later that the normalizations of Cartan
geometries consist in prescribing more complicated curvature restrictions,
which always depend on the algebraic features of the Klein models.

As already mentioned, we are interested in specific functors on Cartan geometries
$(\mathcal G,\omega)$ of the form $\mathcal{G}
\times_H - $, referring to the associated bundle construction given for each
fixed representation of $H$.  See \cite[section 1.5.5]{B} for a detailed discussion on the topic
of natural bundles on Cartan geometries.  Specializing to representations of $H$ which come as 
restrictions of representations of the whole group $G$ leads to the following
definition of \textit{tractor bundles} below.

Recall the sections $v$ of natural bundles $\mathcal V$ are identified with equivariant
functions $v:\mathcal G\to \mathbb V$, i.e. $v(u\cdot g) = g^{-1}\cdot
v(u)$. In particular, consider $\mathbb{V} =
 \mathfrak{g} / \mathfrak{h}=\mathbb R^n$ with the truncated adjoint action
of $H$ (i.e. the induced action on the quotient).
The Cartan connection $\omega$ allows us to identify every tangent vector
$\xi\in T_xM$ with the equivariant function $v:\mathcal G\to \mathbb V$,
$u\mapsto \omega(\tilde\xi(u))$ for an arbitrary lift $\tilde \xi$ of $\xi$. 
This is the identification of the tangent bundle $TM\simeq \mathcal VM$. (And it
completely justifies our earlier quite sloppy usage of elements in $\mathfrak g_{-1}$
instead of tangent vectors etc.)   

So in this way, the
 Cartan connection provides soldering of the tangent bundle, i.e. each
element $u\in \mathcal G$ in the fiber over $x\in M$ can be viewed as a
frame of $T_xM$. In general, different elements $u$ may represent the same
frame, depending on whether the truncated adjoint action of $H$ on
$\mathfrak g/\mathfrak h$ has got a non-trivial kernel. 

\begin{definition}\label{tractor bundle}
 The tractor bundles are natural vector
 bundles associated to the Cartan geometry $(\mathcal{G} \rightarrow M,
 \omega)$ of type $G \to G/H$, via restrictions of a representations of $G$ to
 the subgroup $H$.  

The unique principal connection form $\tilde \omega\in
\Omega^1(\tilde{\mathcal G}) \to \mathfrak g$ 
on the extended principal $G$-bundle
$\tilde{\mathcal G} = \mathcal G\times_H G$ extending the Cartan connection
$\omega$ on $\mathcal G$ induces the so called \emph{tractor connections}
$\nabla^{\mathcal V}$ on all tractor bundles $\mathcal VM$.
\end{definition}

Notice that $\tilde{\mathcal G}$ is indeed a $G$-principal fiber bundle with
the action of $G$ defined by the right multiplication on the standard fiber
$G$. Moreover, $u\mapsto \abrack{u,e}$ provides the canonical inclusion of the
principal fiber bundles $\mathcal G\subset \tilde{\mathcal G}$. 
The requested invariance of $\tilde\omega$, together with the
reproduction of the fundamental vector fields, define the values of 
$\tilde\omega$ completely from its restriction $\tilde \omega =\omega$ on
$T\mathcal G$.

In fact, we can equivalently define the tractor connections on the tractor
bundles directly (by specifying their special properties), instead of
referring to the Cartan connections on $\mathcal G$. This was also the
approach by Thomas in \cite{F}. The equivalence of such approaches for
$|1|$-graded parabolic geometries was noticed and exploited in \cite{E}. In
full generality, the construction, normalization and properties of tractor
connections were derived in
\cite{Cap-Gover} (see also \cite[Sections 1.5 and 3.1.22]{B}).  
 
 

 
\subsection{Adjoint tractors}
 
A prominent example of tractor bundles arises when
considering the $\operatorname{Ad}$ representation of the Lie group $G$ on
its Lie algebra $\mathfrak{g}$
and restricting it to $H$.  Applying the corresponding associated bundle
construction
$\mathcal{G} \times_H - $ on the following short exact
sequence of Lie algebras (with the obvious $\operatorname{Ad}$ actions)
 \[ 0 \to \mathfrak{h} \to \mathfrak{g} \to \mathfrak{g}/\mathfrak{h} \to 0
\]
we obtain  
 \begin{align}\label{sequence of bundles}
     0 \to \mathcal{G} \times_H \mathfrak{h} \to \mathcal{AM} \xrightarrow{\pi} TM  \to 0 \ ,
 \end{align}
where we have identified $TM \cong \mathcal{G} \times_H 
\mathfrak{g} / \mathfrak{h}$. The middle term 
$\mathcal{AM} : = \mathcal{G} \times_H \mathfrak{g} $ is called the 
\textit{adjoint tractor bundle}. 

%

Let us come back to the curvature \eqref{curvature} of the Cartan geometry
now. Clearly we may evaluate $\kappa$ on the so called \emph{constant vector
fields} $\omega^{-1}(X)$ for all $X\in\mathfrak g$. Consider $X\in\mathfrak
h$ and any $Y\in\mathfrak g$. Then $\omega^{-1}(X)$ is the fundamental
vector field $\zeta_X$ and
$\operatorname{d}\omega(\zeta_x,-)=i_{\zeta_X}\operatorname{d}\omega = \mathcal
L_{\zeta_X}\omega = -\operatorname{ad}(X)\circ \omega$, by the equivariancy
of $\omega$. Thus,
$$
\kappa(\omega^{-1}(X),\omega^{-1}(Y)) = \kappa(\omega^{-1}(X),\omega^{-1}(Y))
= -\operatorname{ad}(X)(Y) + [X,Y] = 0
.$$
We have concluded that, actually, the curvature is a horizontal 2-form which
can be represented by the equivariant \emph{curvature function} 
\begin{equation}\label{curvature-function}
\begin{aligned}
\kappa &: \mathcal G \to \Lambda^2(\mathfrak g/\mathfrak h)^*\otimes\mathfrak
g, 
\\ 
\kappa(X,Y)(u) &= 
-\omega([\omega^{-1}(X),\omega^{-1}(Y)](u)) +[X,Y].
\end{aligned} 
\end{equation}  
In particular, we understand that the curvature descends to a genuine 2-form
on the base manifold $M$ valued in the adjoint tractors, i.e. $\kappa \in
\Omega^2(M,\mathcal AM)$.
  

There is much more to say about the adjoint tractors, we shall summarize
several observations in the following two theorems (both were derived in
\cite{Cap-Gover}, see also \cite{B}).

\begin{theorem}
    \begin{enumerate}
        \item There is the (algebraic) Lie bracket 
$\{ \ ,\  \} \colon \mathcal{AM} \times \mathcal{AM} \to \mathcal{AM} $
inherited from the Lie bracket on $\mathfrak g$.
        \item The adjoint tractors are in bijective correspondence with the 
right-equivariant vector fields in $\mathcal X (\mathcal{G})^H$, and the
Lie bracket of vector fields on $\mathcal G$ equips $\mathcal AM$ 
with the differential Lie bracket $[\ ,\ ]$, which is compatible with the
Lie bracket on the tangent bundle $TM$, i.e. 
$\pi [ \zeta, \eta] = [ \pi \zeta, \pi  \eta] $.\footnote{Recall that $\pi \colon  \mathcal{AM} \to TM $ is the projection from sequence \eqref{sequence of
bundles}.}
        \item If $\mathcal V$ is a tractor bundle, then there is the natural
map $ \bullet  \colon \mathcal{AM} \times \mathcal{VM} \to 
\mathcal{VM}$, corresponding to the action of $\mathfrak g$ given by the 
$G$-representation $\mathbb{V}$. Moreover,  
$ \{ s_1, s_2\} \bullet t = s_1 \bullet s_2 \bullet t - s_2 \bullet s_1
\bullet t$.
        \item The bracket $\{ \ , \ \}$ and the actions $\bullet$ are 
parallel with respect to the tractor connections $\nabla^{\mathcal A}$,
$\nabla^{\mathcal V}$, i.e. for $s \in \mathcal{AM}$ and $ v \in \mathcal{VM} $
we know
        \begin{align*}
            \nabla^{\mathcal{A}}_{\xi} \{ s_1, s_2\} & = \{
\nabla^{\mathcal{A}}_{\xi} s_1, s_2\}  + \{ s_1, \nabla^{\mathcal{A}}_{\xi} s_2 \}, \\
            \nabla^{\mathcal{V}}_{\xi} ( s \bullet v ) & = (
\nabla^{\mathcal{A}}_{\xi}  s )  \bullet v + s \bullet  (
\nabla^{\mathcal{V}}_{\xi}  v ) .
        \end{align*}
\item For every tractor bundle $\mathcal V$, the value of the 
curvature $R^{\mathcal V}$ 
of the tractor connection $\nabla^{\mathcal V}$ is (for all vector fields
$\xi$, $\eta$ on $M$ and sections $v$ of $\mathcal VM$)
$$
R^{\mathcal V}(\xi,\eta)(v) = \kappa(\xi,\eta)\bullet v
,$$
where $\kappa\in\Omega^2(M,\mathcal AM)$ is the curvature of the Cartan
connection. 
    \end{enumerate}
\end{theorem}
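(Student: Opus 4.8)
The plan rests on one organizing principle: each of the five structures is the shadow on associated bundles of an algebraic datum on $\mathfrak g$ and $\mathbb V$ that is equivariant for the relevant group, so that it descends to the bundles and inherits compatibility and parallelism automatically. For item (1), I would observe that the Lie bracket $[\ ,\ ]\colon\mathfrak g\times\mathfrak g\to\mathfrak g$ is $\operatorname{Ad}(H)$-equivariant, since $\operatorname{Ad}(h)[X,Y]=[\operatorname{Ad}(h)X,\operatorname{Ad}(h)Y]$; hence the fibrewise prescription $\{\abrack{u,X},\abrack{u,Y}\}=\abrack{u,[X,Y]}$ is independent of the representative $u$ and defines $\{\ ,\ \}$ on $\mathcal{AM}$. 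For item (3), the same reasoning applies to the action map $\mathfrak g\times\mathbb V\to\mathbb V$, $(X,v)\mapsto X\cdot v$, coming from the derived $G$-representation: it is $H$-equivariant because $h\cdot(X\cdot v)=(\operatorname{Ad}(h)X)\cdot(h\cdot v)$, so $\abrack{u,X}\bullet\abrack{u,v}=\abrack{u,X\cdot v}$ is well defined. The identity relating $\{s_1,s_2\}\bullet t$ to the commutator of the two actions is then merely the fibrewise statement that $X\mapsto X\cdot(-)$ is a Lie algebra homomorphism $\mathfrak g\to\mathfrak{gl}(\mathbb V)$, i.e. $[X,Y]\cdot v=X\cdot(Y\cdot v)-Y\cdot(X\cdot v)$.

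For item (2), a section of $\mathcal{AM}$ is an equivariant function $s\colon\mathcal G\to\mathfrak g$ with $s(u\cdot h)=\operatorname{Ad}(h^{-1})s(u)$. Using that $\omega(u)\colon T_u\mathcal G\to\mathfrak g$ is a linear isomorphism (Definition \ref{CG}), I would assign to $s$ the vector field $\zeta_s=\omega^{-1}(s)$. The equivariance $(r^h)^*\omega=\operatorname{Ad}(h^{-1})\circ\omega$ together with the equivariance of $s$ shows that $T r^h\cdot\zeta_s(u)=\zeta_s(u\cdot h)$, so $\zeta_s$ is right $H$-invariant, and the correspondence is manifestly a bijection onto $\mathcal X(\mathcal G)^H$. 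Since the bracket of two right-invariant fields is again right-invariant, transporting it back yields the differential bracket $[\ ,\ ]$ on $\mathcal{AM}$, and compatibility with $TM$ follows because right-invariant fields are $p$-related to their projections while $p$-relatedness is preserved under Lie brackets, giving $\pi[\zeta,\eta]=[\pi\zeta,\pi\eta]$.

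For items (4) and (5) I would pass to the extended bundle $\tilde{\mathcal G}=\mathcal G\times_H G$ with its principal $G$-connection $\tilde\omega$ extending $\omega$, which by Definition \ref{tractor bundle} computes all tractor connections as induced connections. The key point is that $\{\ ,\ \}$ and $\bullet$ were built from $G$-equivariant maps (the adjoint action and the representation $\mathbb V$), so the associated bundle maps are parallel for the induced connections: an induced connection differentiates along $\tilde\omega$-horizontal lifts, on which a pointwise-constant equivariant operation satisfies the Leibniz rule by construction, and both displayed identities of (4) are instances of this. For (5) I would invoke the standard formula that the curvature of a connection induced from a principal connection is the derived representation applied to the curvature of the principal connection, $R^{\mathcal V}(\xi,\eta)v=\tilde\kappa(\xi,\eta)\bullet v$; it remains only to identify $\tilde\kappa$ with $\kappa$. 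This is immediate from the inclusion $\mathcal G\hookrightarrow\tilde{\mathcal G}$, $u\mapsto\abrack{u,e}$, along which $\tilde\omega$ restricts to $\omega$, so that pulling back gives $d\tilde\omega+\frac12[\tilde\omega,\tilde\omega]=d\omega+\frac12[\omega,\omega]=\kappa$ and the two curvatures descend to the same $\mathcal{AM}$-valued two-form on $M$.

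The main obstacle, I expect, is not items (1)--(3), which are bookkeeping, but making precise the passage through $\tilde{\mathcal G}$ in (4)--(5): one must verify that $\tilde\omega$ really is the unique principal connection extending $\omega$, that the associated bundles $\tilde{\mathcal G}\times_G\mathbb V$ and $\mathcal G\times_H\mathbb V$ are canonically identified, and that the abstract induced-connection formulae transfer to the Cartan-geometric setting without hidden compatibility assumptions. For these foundational identifications I would lean on \cite[Sections 1.5 and 3.1.22]{B} and \cite{Cap-Gover} rather than rederiving them, treating the theorem as an efficient bundling of consequences of the single fact that $\tilde\omega$ extends $\omega$.
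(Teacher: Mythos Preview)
Your proposal is correct and follows essentially the same approach as the paper: items (1)--(3) are handled by the $\operatorname{Ad}$-equivariance of the bracket and the $G$-equivariance of the infinitesimal action, item (2) via the trivialization of $T\mathcal G$ by $\omega$ and the relatedness of brackets under projection, and items (4)--(5) by passing to the extended bundle $\tilde{\mathcal G}$ and invoking standard properties of connections induced from a principal connection. Your treatment of (5) is in fact more explicit than the paper's, which simply remarks that the same argument as for (4) applies; your observation that $\tilde\omega$ restricts to $\omega$ along $\mathcal G\hookrightarrow\tilde{\mathcal G}$ and hence the two curvatures agree is precisely the content behind that remark.
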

 
 \begin{proof}
The first claim is obvious just by definition. The Lie bracket on the Lie
algebra is $\operatorname{Ad}$-equivariant.

The adjoint tractors are smooth equivariant functions $\mathcal G\to
\mathfrak g$. At the same time $\omega$ makes $T\mathcal G$ trivial. Now
all $\xi\in \mathcal G$ correspond to $\omega\circ \xi:\mathcal G\to \mathfrak
g$ and the right invariant fields $\xi$ correspond just to the adjoint
tractors. Since the Lie brackets of related fields is again related (here with respect to the
principal actions of the elements in $H$), the Lie bracket restricts to
$\mathcal X(G)^H$. Moreover, the right-invariant fields are projectable onto
vector fields on $M$, and the same argument applies to brackets of the
projections. 

The third claim also follows directly from the definitions. Indeed, writing
$\lambda$ for the representation $\lambda:H\to \operatorname{GL}(\mathbb V)$,
and $\lambda'$ for its differential at the unit, 
we recall
$\operatorname{exp}(t\operatorname{Ad}(g)(X))=g\operatorname{exp}(tX)g^{-1}$
and thus, differentiating we arrive at
$$
\lambda'(\operatorname{Ad}(g)(X))(\lambda(g)(v)) = \lambda(g)(\lambda'(X)(v))
.$$
Consequently, the bilinear map $\mathfrak g\times \mathbb V\to \mathbb V$
defined by $\lambda'$ is $G$ equivariant, it induces the map
$\bullet:\mathcal AM\times \mathcal VM\to \mathcal VM$ and the bracket formula
is just the defining property of a Lie algebra representation, in this
picture.

The next claim is a straightforward consequence of the fact that both $\{\
,\ \}$ and $\bullet$ are operations induced by $G$-equivariant maps. Thus we
may view them as living on the associated bundles to the extended $G$-principal
fiber bundle $\tilde{\mathcal G}$. The formulae are just simple properties
of the induced linear connections associated to a principal connection. 

The
same argument holds true for the last claim as well.
\end{proof}

Notice also the definition of the operation $\bullet$ extends to all natural
bundles $\mathcal V$, if we restrict the tractors only to the natural
subbundle $\operatorname{ker}\pi\subset \mathcal AM$ of all vertical right
invariant vector fields on $\mathcal G$, including the bracket compatibility 
property.
 
 \subsection{Fundamental derivative}

Consider the natural bundle  $\mathcal V : = \mathcal{G} \times_H \mathbb{V}$
associated to an $H$-representation $\lambda$ on $\mathbb{V}$. Then, viewing the adjoint
tractors as right invariant vector fields on $\mathcal G$, we can define the
differential operator $\operatorname{D} \colon \mathcal{AM} \times 
\mathcal VM \to \mathcal VM$ by the formula 
$$
\operatorname{D}_s v = s \cdot v,
$$ 
where $s \in \mathcal{AM}$ is any tractor in $\mathcal X(\mathcal
G)^H$ differentiating the
function $v:\mathcal G \to \mathbb V $. A simple check,
$$
     s ( u \cdot h ) \cdot v  = ( Tr^h\cdot s(u)  ) \cdot v 
 = s (u) \cdot (v \circ r^h ) = s(u) \cdot (\lambda_{h^{-1}} \cdot v ) =
\lambda_{h^{-1}}(s(u)\cdot v) 
,$$
reveals that the result is again a 
smooth $\mathbb{V}$-valued $H$-equivariant mapping on $\mathcal{G}$. We call
this operator $\operatorname{D}$ the \emph{fundamental derivative}. 

Notice that extending the tangent bundle to the adjoint tractors, we always
have a canonical way of `differentiating' on all natural bundles for all
Cartan geometries. As we may expect, there will be a lot of redundancy in
such differentiation, since the vertical tractors in the kernel of the
projection $\mathcal AM\to TM$ must act in an algebraic way due to the
equivariance of the functions $v$. 

Let us summarize some simple but very useful consequences of our
definitions: 

\begin{theorem}
\begin{enumerate} 
\item The fundamental derivative on the smooth functions (i.e.
we consider the trivial representation $\mathbb V=\mathbb R$) is just the
derivative in the direction of the projection:
$$
D_sf = \pi(s)\cdot f\, .
$$
\item If the adjoint tractor $s$ is vertical, i.e. $\pi(s)=0$, then for
every section $v$ of a natural bundle $\mathcal VM$,  
$$\operatorname{D}_s v = - s \bullet v
.$$
\item The fundamental derivative $\operatorname{D}$ is compatible with 
all natural operations on natural bundles (i.e. those coming from
$H$-invariant maps between the corresponding representation spaces). 
For example, having sections $v$, $v^*$, and $w$ of natural
bundles $\mathcal V$, $\mathcal V^*$, $\mathcal W$, and a function $f$
\begin{align*}
\operatorname{D}_s (f v) = (\pi (s) \cdot f ) v + f \operatorname{D}_s v 
\\
\operatorname{D}_s (v  \otimes w) = \operatorname{D}_s v  \otimes w + v \otimes 
\operatorname{D}_s w
\\
\pi(s)\cdot v^*(v) = (\operatorname{D}_s v^*)(v) + v^*(\operatorname{D}_s v) 
\end{align*}
\item If $\mathbb{V}$ is a $G$-representation, i.e. $\mathcal V$ is a tractor 
bundle, then 
\[ \nabla^{\mathcal{V}}_{\pi (s)} v = \operatorname{D}_s v + s \bullet v  
.\] 
\end{enumerate}
 \end{theorem}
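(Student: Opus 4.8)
The plan is to reduce all four claims to the single fact that $\operatorname{D}_s v = s\cdot v$ is just directional differentiation of the equivariant function $v\colon\mathcal G\to\mathbb V$ along the right-invariant field $s$, combined with the standard formula for such a derivative along a fundamental vector field. Writing $\lambda$ for the representation and $\lambda'$ for its derivative, the Cartan-connection axioms give, for $X\in\mathfrak h$,
\[
\zeta_X\cdot v\,(u)=\frac{d}{dt}\Big|_0 v(u\cdot\exp tX)=\frac{d}{dt}\Big|_0\lambda_{\exp(-tX)}\,v(u)=-\lambda'(X)\bigl(v(u)\bigr).
\]
Since by definition $s\bullet v=\lambda'(\omega(s))\cdot v$, this identity is the source of every sign below.

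For part (1), a section of the trivial bundle is an $H$-invariant function, i.e. the pullback of a function on $M$, so differentiating it along $s$ only detects the projection and $s\cdot f=\pi(s)\cdot f$. For part (2), I would observe that $\operatorname{D}_s v\,(u)=s\cdot v\,(u)$ depends only on the single tangent vector $s(u)$; when $s$ is vertical this vector equals $\zeta_{X(u)}(u)$ with $X(u):=\omega(s(u))\in\mathfrak h$, and the displayed formula applied pointwise yields $\operatorname{D}_s v=-\lambda'(\omega(s))\cdot v=-s\bullet v$. For part (3), each natural operation is induced by a fixed $H$-equivariant multilinear map of representation spaces, hence acts algebraically and fibrewise on the equivariant functions; the asserted identities are then simply the Leibniz rule for the directional derivative $s\cdot(-)$, with part (1) used to rewrite the scalar term as $\pi(s)\cdot f$ and to handle the invariant pairing $v^*(v)$, which descends to $M$.

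The only part using the tractor connection is (4), and it is the main point to get right. I would pass to the extended principal $G$-bundle $\tilde{\mathcal G}=\mathcal G\times_H G$ with its principal connection $\tilde\omega$ satisfying $\tilde\omega|_{T\mathcal G}=\omega$, whose associated covariant derivative is $\nabla^{\mathcal V}$. For a principal connection the covariant derivative along any lift $\hat\xi$ of a vector $\xi$ reads $\nabla^{\mathcal V}_\xi v=\hat\xi\cdot v+\lambda'(\tilde\omega(\hat\xi))\cdot v$, the correction being precisely the fundamental-field formula above applied on $\tilde{\mathcal G}$ (now for $X\in\mathfrak g$) to the vertical part of $\hat\xi$. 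Taking $\hat\xi=s$, viewed as a field along $\mathcal G\subset\tilde{\mathcal G}$ lifting $\pi(s)$, and using $\tilde\omega(s)=\omega(s)$, gives $\nabla^{\mathcal V}_{\pi(s)} v=s\cdot v+\lambda'(\omega(s))\cdot v=\operatorname{D}_s v+s\bullet v$. The difficulty here is bookkeeping rather than depth: one must check that $s$, a priori a right-$H$-invariant field on $\mathcal G$, is a genuine lift of $\pi(s)$ once included into $\tilde{\mathcal G}$, that $\tilde\omega$ restricts to $\omega$ along it, and above all that the minus sign of the fundamental-field formula is tracked consistently so that in parts (2) and (4) the $\bullet$-terms acquire exactly the stated signs.
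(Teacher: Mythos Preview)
Your proposal is correct and follows essentially the same approach as the paper: both reduce everything to directional differentiation of equivariant functions, invoke the fundamental-field identity $\zeta_X\cdot v=-\lambda'(X)(v)$ for the algebraic parts, and for (4) pass to $\tilde{\mathcal G}$ and use that $\tilde\omega|_{T\mathcal G}=\omega$. The only cosmetic difference is that the paper phrases (4) via the explicit horizontal lift $\xi-\zeta_{\omega(\xi)}$ while you use the equivalent ``any lift plus connection-form correction'' formula; these unwind to the same computation and the sign bookkeeping you flag is exactly what the paper's argument is doing implicitly.
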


\begin{proof}
The equivariant functions $\mathcal G\to \mathbb R$ are just the
compositions of functions $f$ on the base manifold $M$ with the projection
$p:\mathcal G\to M$. Thus the first property is obvious, $s\cdot (f\circ p) =
(Tp\cdot s)\cdot f = \pi(s)\cdot f$.

If $s$ is vertical, then $s(u)= \zeta_Z(u)$, where $\zeta_Z$
is a fundamental vector field given by $Z\in\mathfrak h$. 
Thus, 
$$
s(u)\cdot v = \frac d{dt}{}_{|0} 
r^{\operatorname{exp}tZ}(u)\cdot v = -\lambda'(Z)(v(u))=(-s\bullet v)(u)
.$$

The third property is again obvious -- as long as the natural operations
come from (multi)linear $H$-invariant maps, these will be compatible with
the differentiations of functions valued in those spaces, 
in the directions of the right-invariant vector fields.

In order to see the last formula, consider a vector $\xi\in
T_u{\mathcal G}\subset T_u\tilde{\mathcal G}$, covering a vector $\tau\in
T_xM$. 
Then the horizontal lift of $\tau$ at the frame $u\in\mathcal G\subset
\tilde{\mathcal G}$ is $\xi-\zeta_{\tilde\omega(\xi)} = \xi -
\zeta_{\omega(\xi)}$. But the tractor connection is defined as the
derivative of the equivariant function $v$ in any frame of $\tilde{\mathcal
G}$ in the direction of the horizontal lift and we obtain exactly the
requested formula interpreting $\xi$ as the value of the right-invariant
vector field $s$ (i.e. the adjoint tractor viewed as the equivariant function at
$u$ is expressed just via $\omega(\xi)$).   

\end{proof} 

If we leave the slot for the adjoint tractor in the fundamental derivative
free, we obtain the operator $\operatorname{D}:\mathcal VM \to \mathcal
A^*M\otimes \mathcal VM$, and this can be obviously iterated,
$$
\operatorname{D}^k:\mathcal VM \to \otimes^k\mathcal A^*M\otimes \mathcal VM
$$

Of course, there is a lot of redundancy in these higher order operators
compared to standard jet spaces of the sections. In the case of the first order, we can
identify the first jet prolongations $J^1\mathcal V$ of natural bundles as
the natural bundles associated to the representations $J^1\mathbb V$ which
are much smaller $H$-submodules in the modules 
$\mathbb V\oplus \operatorname{Hom}(\mathfrak g,\mathbb V)$
corresponding to the values of the fundamental derivative. This is a useful
observation because it implies that all invariant first order differential
operators on the homogeneous models extend naturally to the entire category
of the Cartan geometries with this model.

Before returning to the parabolic special cases, let us remark two more
facts. The proofs are using similar arguments as above and the 
reader can find them in \cite[sections 1.5.8, 1.5.9]{B}.

Expanding the formula for the exterior differential in the defining equation
of the curvature $\kappa$, we can express the differential bracket on
$\mathcal AM$:
\begin{equation}\label{bracket-formula-on-A}
\begin{aligned}
[s_1,s_2] &= \operatorname{D}_{s_1} s_2 - \operatorname{D}_{s_2}s_1 -
\kappa(\pi(s_1),\pi(s_2)) + \{s_1,s_2\}
\\
&= \nabla^{\mathcal A}_{\pi(s_1)}s_2 - \nabla^{\mathcal A}_{\pi(s_2)}s_1
-\kappa(\pi(s_1),\pi(s_2)) -\{s_1,s_2\} 
.\end{aligned}
\end{equation}

There is the generalization of the well known Bianchi identities for
curvature in the general Cartan geometry setting:
\begin{equation}\label{Bianchi1}
\sum_{\text{cyclic}}\left(\nabla^{\mathcal A}_{\xi_1}(\kappa(\xi_2,\xi_3)) -
\kappa([\xi_1,\xi_2],\xi_3) \right) = 0
\end{equation}
for all vector fields $\xi_1$, $\xi_2$, $\xi_3$, or its equivalent form for
triples of adjoint tractors:
\begin{equation}\label{Bianchi2}
\sum_{\text{cyclic}}\bigl( \{s_1,\kappa(s_2,s_3)\} - \kappa(\{s_1,s_2\},s_3)
+ \kappa(\kappa(s_1,s_2),s_3) + (\operatorname{D}_{s_1}\kappa)(s_2,s_3)
\bigr) = 0
.\end{equation}
Similarly, the Ricci
identity has got the general form for every section $v$ of a natural bundle
$\mathcal V$:
\begin{equation}\label{Ricci}
(\operatorname{D}^2 v)(s_1,s_2) - (\operatorname{D}^2 v)(s_2,s_1) =
-\operatorname{D}_{\kappa(s_1,s_2)}v + \operatorname{D}_{\{s_1,s_2\}}v
.\end{equation}

Notice, how easy we can read the classical identities for the affine
connections from the latter two. Since the Cartan geometry is modelled on
$\mathbb R^n=\operatorname{Aff}(n,\mathbb R)/\operatorname{GL}(n,\mathbb R)$
and the Lie algebra decomposes into direct sum of $\mathfrak{gl}(n,\mathbb
R)$-modules $\mathfrak g_{-1}=\mathbb R^n$ and $\mathfrak g_0 =
\mathfrak{gl}(n,\mathbb R)$, all the formulae decompose by homogeneities,
$\mathcal AM = TM\oplus P^1M$ (here $P^1M$ is the linear frame bundle of
$TM$), the bracket $\{\ ,\ \}$ becomes trivial on $TM$, while the mixed
bracket is just the evaluation. Thus, the Bianchi identity can be evaluated
on tangent vectors and it decomposes 
into the two
classical Bianchi identities for the torsion free connections, while it 
gets the
more complex quadratic form in general. Similarly for Ricci, evaluated on
$s_1$ and $s_2$ in $TM$. If the
torsion is zero, $\kappa$ has got only vertical values and thus the first
term on the right hand side is the algebraic action of the curvature (with plus sign),
while the other one vanishes.

\subsection{Back to parabolic geometries}

Recall the parabolic cases always come with the splitting 
 \[ \mathfrak{g} = \mathfrak{g}_{-} \oplus \mathfrak{p}, \] 
where $\mathfrak g_-$ is a subalgebra (but only a $\mathfrak g_0$ submodule). 
As before, we shall restrict ourselves to the $|1|$-graded case, although
            the below formulae easily extend to the general case.

%

Consider the category of parabolic geometries with the model $G/P$ and a
$P$-representation $\mathbb V$ which decomposes with respect to the action
of the grading element $E\in\mathfrak g_0$ into 
$\mathbb V=\mathbb V_0\oplus\dots\oplus \mathbb V_k$. The adjoint tractor
bundle has got the composition series
$$
\mathcal AM = TM \oplus \operatorname{End}TM\oplus T^*M
$$
where the middle term is a subbundle in $T^*M\otimes TM$ corresponding
to the group $G_0=P/P_+$. Again, $T^*M$ is the injecting part while $TM$ is the
projecting part, and the algebraic bracket $\{\ ,\ \}$ maps $T^*M\times TM\to
\operatorname{End}TM$. 

Once we fix a Weyl connection $\nabla$, 
the Rho-tensor becomes
a one-form valued in $T^*M\subset \mathcal AM$, we get the Rho-corrected
derivative $\nabla^\Rho$, all $P$-modules get split into $G_0$-irreducible
components which can be grouped according to the actions of the grading
element in $\mathfrak g_0$ etc.

\begin{theorem}\label{derivatives-parabolic} The fundamental derivative
$\operatorname{D}$ on $\mathcal V$ is given in terms of any choice of Weyl
connection by
$$
( \operatorname{D}_s v )_i  = ( \nabla^{\Rho}_{\pi (s)} v )_i - s_0\bullet
v_i - s_1\bullet v_{i-1}  
= \nabla_{\pi (s)} v_i - s_0 \bullet v_i + (\Rho(\pi (s)) - s_1) \bullet
v_{i-1}
$$
where $s=(\pi(s),s_0,s_1)$ and we indicate the splitting $\mathbb V=\mathbb
V_0\oplus\dots\oplus\mathbb V_k$ with respect to the action of the grading
element by the extra lower indices.

If $\mathcal V$ is a tractor bundle, then the tractor connection is given by
$$
(\nabla^{\mathcal V}_\xi v)_i = (\nabla^{\Rho}_\xi v)_i + \xi\bullet v_{i+1}
= \nabla_\xi v_i + \Rho(\xi)\bullet v_{i-1} + \xi\bullet v_{i+1}.
$$
\end{theorem}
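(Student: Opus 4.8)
The plan is to deduce both formulae directly from the earlier characterizations of $\operatorname{D}$ and $\nabla^{\mathcal V}$, using the transformation theorems and the Rho-corrected derivative as intermediaries, rather than computing from scratch. The key relations I would invoke are the identity $\nabla^{\mathcal V}_{\pi(s)}v = \operatorname{D}_s v + s\bullet v$ from the final property in the fundamental-derivative theorem, the formula $\nabla^\Rho_\xi v = \nabla_\xi v + \Rho(\xi)\cdot v$ from Theorem \ref{Rho-corrected}, and the description of the $\bullet$-action as the action of $\mathfrak g$ via $\lambda'$. The main bookkeeping device is homogeneity with respect to the grading element $E$: since $\mathfrak g_{-1}$, $\mathfrak g_0$, $\mathfrak g_1$ shift the grading of $\mathbb V$ by $-1$, $0$, $+1$ respectively, the action $\bullet$ decomposes into pieces that move a section between the components $\mathbb V_i$ in a controlled way. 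Writing $s=(\pi(s),s_0,s_1)$ with $\pi(s)\in\mathfrak g_{-1}$, $s_0\in\mathfrak g_0$, $s_1\in\mathfrak g_1$, the element $s_0$ preserves the grading (so $s_0\bullet v_i\in\mathbb V_i$), while $s_1\in\mathfrak g_1$ raises grading, so $s_1\bullet v_{i-1}\in\mathbb V_i$.

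First I would establish the fundamental-derivative formula. The idea is to unwind $\operatorname{D}_s v$ using $\operatorname{D}_s v = \nabla^{\mathcal V}_{\pi(s)}v - s\bullet v$, but since we want an expression in terms of a \emph{Weyl} connection, I would instead start from the fact that $\operatorname{D}$ and $\nabla^\sigma$ differ only by the algebraic action of the vertical part of $s$. Concretely, for a Weyl connection $\nabla$ attached to the reduction $\sigma$, the frame-level derivative $\nabla_{\pi(s)} v$ captures the $\mathfrak g_{-1}$-direction differentiation, and the remaining vertical slots $s_0,s_1$ must act algebraically by the second property of the fundamental-derivative theorem (for vertical $s$, $\operatorname{D}_s v = -s\bullet v$). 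Decomposing $\bullet$ by homogeneity and reorganizing the $s_1$-term together with the Rho contribution coming from passing between $\nabla$ and $\nabla^\Rho$ should produce precisely the stated pair of equalities; the equivalence of the two displayed forms is then just the substitution $\nabla^\Rho_{\pi(s)}v = \nabla_{\pi(s)}v + \Rho(\pi(s))\cdot v$ applied componentwise.

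Second, for the tractor-connection formula I would feed the fundamental-derivative result into the relation $\nabla^{\mathcal V}_\xi v = \operatorname{D}_s v + s\bullet v$, choosing the adjoint tractor $s$ with $\pi(s)=\xi$ and $s_0=0$, $s_1=0$. Then $s\bullet v = \xi\bullet v$, which by homogeneity is $\xi\bullet v_{i+1}$ in the $i$-th component (since $\xi\in\mathfrak g_{-1}$ lowers grading). Adding this to $(\operatorname{D}_s v)_i=(\nabla^\Rho_\xi v)_i$ gives the first equality, and expanding $\nabla^\Rho$ via Theorem \ref{Rho-corrected} gives the second. The main obstacle I anticipate is purely organizational: keeping the grading shifts, sign conventions, and the distinction between $\cdot$ (the $\mathfrak g$-action at frame level) and $\bullet$ (its descent to associated bundles, carrying the opposite sign as seen in the vertical case $\operatorname{D}_s v=-s\bullet v$) perfectly consistent, so that the Rho-term lands with the correct sign and on the correct component $v_{i-1}$. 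Once the homogeneity bookkeeping is fixed, both formulae follow without genuinely new computation.
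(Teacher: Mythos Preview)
Your proposal is correct and matches the paper's own approach: the paper's proof states only that ``both formulae are direct consequences of the general formulae and the definitions,'' and your plan unpacks exactly that, invoking property~2 and property~4 of the fundamental-derivative theorem, Theorem~\ref{Rho-corrected}, and the homogeneity bookkeeping for the $\bullet$-action. Your choice of the adjoint tractor $s=(\xi,0,0)$ in the Weyl splitting to deduce the tractor-connection formula from the fundamental-derivative formula is the intended step.
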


\begin{proof}
Both formulae are direct consequences of the general formulae and the
definitions. The reader may also consult \cite[section 5.1.10]{B}.
\end{proof}

%



\subsection{Towards effective calculus for conformal geometry}
 
Now, with the general concepts and formulae at hand, it is obvious that
the Thomas' tractors come equipped with the nice tractor connection on all
conformal Riemannian manifolds in the sense of Cartan geometries and the
connection will be always given by the formulae in Theorem
\ref{Thomas-formula}, which are manifestly invariant. Moreover, we know
that the curvature of the Thomas' tractor connection on the sphere (with the
Maurer-Cartan form $\omega$) is zero.

But we still cannot be happy enough, for at least two reasons. First, we 
want to define the geometries by a structure on the tangent bundle 
and we shall come
to that question in the next lecture. Second, we need some more effective
manifestly natural operators than the fundamental derivative.

We shall only briefly comment on the latter problem and advise the readers
to look at \cite{A} for much more information.

Already Tracy Thomas constructed the differential operator $D$   
which is invariant for $\sigma \in \mathcal{E}[1]$, with values in $\mathcal
 T$ (we follow the usual convention of \cite{A} and write the projecting
part in the top, while the injecting part is in the bottom of the column
vector). We may follow our prolongation of the `conformal to Einstein'
equation from the second lecture. Starting with $\sigma$ in $\mathcal E[1]$,
we first put $\mu_a=\nabla_a\sigma$ in $\mathcal E_a[1]$ and then, contracting the equation
$\nabla_a\nabla_b\sigma+P_{ab}\sigma + g_{ab}\rho=0$ we see $-n\rho =
\nabla^a\nabla_a\sigma + P^a{}_a\sigma$. Thus, adjusting the $1/n$ factor, we
arrive at the operator $D:\mathcal E[1]\to \mathcal T$
\begin{align}\label{Thomas D-operator}
      \sigma \xrightarrow{D} \begin{pmatrix} n \sigma \\ n \nabla_a \sigma \\
-(\nabla^a\nabla_a + P^a{}_a ) \sigma  \end{pmatrix}.
  \end{align}
This \emph{Thomas' D-operator} extends to all densities $\mathcal E[w]$. 
 For $f \in \mathcal{E}[w]$ we define $Df$ in $\mathcal T[w-1]$ as 
\begin{align}
 Df =  \begin{pmatrix} (n+2w - 2)wf \\ (n+2w - 2) \nabla_a f \\
-(\nabla^a \nabla_a + w P^a{}_a ) f \end{pmatrix}     
. \end{align}

In particular, we should notice the following facts.  
For $w = 0$, the first nonzero slot in the column is $(n-2)\nabla_af$. Thus,
this operator must be invariant and we have recovered 
the usual differential of functions.

A much more interesting choice is $w = 1 - \frac{n}{2}$ since this kills the
first two components and the third one gets manifestly invariant. 
This way we get the second order operator  $\nabla^a\nabla_a + \frac{2-n}2 P^a{}_a $ and we recognize the celebrated Yamabe
operator mentioned already in the third lecture. (Just checking the
pedestrian way the invariance of this operator shows that the general theory
was worth the effort!)

This example indicates where the genuine tractor calculus goes with the aim
to construct manifestly invariant operators in an effective way. 


\section{The (co)homology and normalization}

We shall continue with parabolic $P \subset G$ and the Klein model $G \to
G/P$, mainly restricting to $|1|$-graded $\mathfrak g$. Thus $\mathfrak{g} = \mathfrak{g}_- \oplus \mathfrak{g}_0 \oplus \mathfrak{p}_+
= \mathfrak g_{-1}\oplus \mathfrak g_0\oplus \mathfrak g_1$. 

Recall that any choice of the reduction $\sigma:\mathcal G_0= \mathcal
G/\operatorname{exp}\mathfrak g_1\to \mathcal G$ 
of the structure group of a Cartan
geometry $(\mathcal G\to M,\omega)$ provides the pullback
$\sigma^*(\omega)$ which splits into the soldering form
$\theta\in\Omega^1(\mathcal G_0,\mathfrak g_{-1})$
(independent of the choice of $\sigma$), the Weyl connection $\nabla_a$, and
the Rho-tensor $\Rho_{ab}$, which is a $T^*M$ valued one-form on $M$.
Moreover, the adjoint tractor bundle splits as 
$$
\mathcal AM=TM\oplus \mathcal
A_0M \oplus T^*M
.$$ 

Our aim is now to find some suitable normalization allowing to construct a
natural Cartan connection from the data on $\mathcal G_0$. Once we succeed,
the tractor calculus related to this Cartan connection will become a natural
part of the geometry defined on $\mathcal G_0$. We shall see that
the crucial tool at our disposal is related to the cohomological properties 
of the Lie algebras in question. There are two equivalent ways: either to normalize the
curvature of the Cartan connection, or to normalize the curvature of a
suitable tractor connection. We shall show the first one, the other one 
was first achieved in \cite{Cap-Gover}, and both
are explained in full generality in \cite[chapter 3]{B}. 

\subsection{Deformations of Cartan connections}

The obvious idea is to quest for normalizations which will make the
curvatures of the Cartan connections as small as possible. In particular,
this will ensure that the right Cartan connections on homogeneous models
will be the Maurer-Cartan forms.

Consider two Cartan connections on the same principal bundle $\mathcal G\to
M$, $\omega$ and $\tilde\omega$. Then their difference $\Phi =
\tilde\omega - \omega$ clearly vanishes on all vertical vectors and is
right-invariant. Thus, we deal with a one-form $\Phi\in\Omega^1(M,\mathcal
AM)$. 

In the $|1|$-graded case, let us understand the `geometry' on $M$ as
the choice of the $G_0$-principal bundle $\mathcal G_0$ together with the
soldering form $\theta$, i.e. we adopt the most classical concept of a
G-structure as a reduction of the first order linear frame
bundle $P^1M$ to the structure group $G_0$. (We already mentioned in the
examples in lecture 3, that the
projective geometries are different.) It is obvious from our definitions
that the two Cartan connections will define the same structure in the latter
sense if and only if their difference has got values in $\mathfrak p$.
Thus, in our $|1|$-graded cases, $\Phi$ should be in $\Omega^1(M,\mathcal
A_0M\oplus T^*M)$.

In the general situation with longer gradings, we have to be much more
careful with the definition of the $G_0$-structure which has to be
generalized to the filtered manifolds. In brief, the tangent space inherits
the filtration by $\mathfrak p$-submodules of $\mathfrak g_-$ and a full
analog of the classical G-structure has to be considered on the associated
graded vector bundle $\operatorname{Gr}TM$. We shall not go to any details
here, the reader can find a detailed exposition in \cite[chapter 3]{B}. 

As we know, the curvature can be also viewed as the curvature function
$\kappa:\mathcal G \to \Lambda^2(\mathfrak g/\mathfrak p)^*\otimes \mathfrak
g$, and $(\mathfrak g/\mathfrak p)^*= \mathfrak p_+$ via the Killing form on
$\mathfrak g$. Thus, we should like to know how $\kappa$ changes if we
deform the Cartan connection by $\Phi$ in
$\Omega^1(M,\mathcal A_0M\oplus T^*M)$. 

Let us write $\kappa_\ell$ for the component of the curvature function of
homogeneity $\ell$, i.e. $\kappa_\ell\in \Lambda^2\mathfrak g_{-1}^*\otimes
\mathfrak g_{\ell-2}$ for the $|1|$-graded parabolic geometries.

\begin{lemma}\label{curvature-deformation}
Assume $\Phi\in\Omega^1(M,\mathcal A_0M\oplus T^*M)$ is of 
homogeneity $\ell=1$ or
$\ell=2$. Then the components of the curvature of homogeneities lower than
$\ell$ remain
unchanged, while the corresponding 
change of the $\mathfrak g_{-1}$ or $\mathfrak g_0$ component of the
curvature, viewed as function valued in 
$\Lambda^2\mathfrak g_{-1}^*\otimes \mathfrak g_i$ 
with $i=-1$ or $0$, respectively, is given by the formula
$$
(\tilde\kappa - \kappa)_i(X,Y) =  [X,\phi(Y)] - [Y,\phi(X)]
$$
where $\phi$ is the equivariant function $\mathcal G\to
\mathfrak g_{-1}^*\otimes(\mathfrak g_0\oplus\mathfrak g_1)$ representing
$\Phi$.
\end{lemma}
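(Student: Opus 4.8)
The plan is to carry out the comparison on the total space $\mathcal G$ and to read off the change of the curvature functions from the structure equation \eqref{curvature}. Writing the deformed connection as $\tilde\omega=\omega+\Phi$, where $\Phi$ now denotes the horizontal, right-invariant, $\mathfrak p$-valued one-form on $\mathcal G$ represented by $\phi\colon\mathcal G\to\mathfrak g_{-1}^*\otimes(\mathfrak g_0\oplus\mathfrak g_1)$, I would substitute into $\kappa=\operatorname{d}\omega+\frac12[\omega,\omega]$. Since the bracket of $\mathfrak g$-valued one-forms is symmetric, $[\omega,\Phi]=[\Phi,\omega]$, the quadratic terms collapse and one obtains the identity of $\mathfrak g$-valued two-forms on $\mathcal G$
\[
\tilde\kappa-\kappa=\operatorname{d}\Phi+[\omega,\Phi]+\tfrac12[\Phi,\Phi].
\]

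Before extracting components I would justify evaluating both sides on the $\omega$-constant fields $\omega^{-1}(X),\omega^{-1}(Y)$ with $X,Y\in\mathfrak g_{-1}$. The subtlety is that the curvature function of $\tilde\omega$ is naturally built from the $\tilde\omega$-constant fields, and $\tilde\omega^{-1}(X)$ differs from $\omega^{-1}(X)$ by the vertical fundamental field $\zeta_{\phi(X)}$. However, the Cartan curvature is always horizontal (this is exactly the content recorded in \eqref{curvature-function}), so every term containing a $\zeta$ drops out; hence $\tilde\kappa$ and $\kappa$ may be computed on the common frame $\omega^{-1}(X),\omega^{-1}(Y)$, and the displayed identity becomes an identity of curvature functions.

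The core of the argument is then a homogeneity count of the three terms evaluated on $\omega^{-1}(X),\omega^{-1}(Y)$. The algebraic term gives $[\omega,\Phi](X,Y)=[X,\phi(Y)]-[Y,\phi(X)]$, which lies in $[\mathfrak g_{-1},\mathfrak g_{\ell-1}]\subset\mathfrak g_{\ell-2}$, that is, in the homogeneity-$\ell$ slot of the curvature. The quadratic term yields $[\phi(X),\phi(Y)]\in\mathfrak g_{2\ell-2}$, of homogeneity $2\ell$, which in particular vanishes for $\ell=2$ since $\mathfrak g_2=0$. For $\operatorname{d}\Phi(\omega^{-1}(X),\omega^{-1}(Y))=\omega^{-1}(X)\cdot\phi(Y)-\omega^{-1}(Y)\cdot\phi(X)-\Phi([\omega^{-1}(X),\omega^{-1}(Y)])$, the two derivative terms retain the value-weight $\ell-1$ while picking up the extra antisymmetric $\mathfrak g_{-1}^*$ factor of a two-form, so they sit in homogeneity $\ell+1$; and in the last term $[X,Y]\in\mathfrak g_{-2}=0$ leaves only $\phi$ applied to the torsion $\kappa_{-1}(X,Y)$ via \eqref{curvature-function}, again of homogeneity $\ell+1$. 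Thus the algebraic bracket is the unique contribution of lowest homogeneity $\ell$: the components below $\ell$ are untouched, and the $\mathfrak g_{\ell-2}$-valued component, i.e. $i=-1$ for $\ell=1$ and $i=0$ for $\ell=2$, changes by exactly $[X,\phi(Y)]-[Y,\phi(X)]$.

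The step I expect to be the main obstacle is this weight bookkeeping, and specifically the observation that the pointwise Lie bracket with $\omega$ lowers the $\mathfrak g$-grading of the value by one ($[\mathfrak g_{-1},\mathfrak g_j]\subset\mathfrak g_{j-1}$) whereas differentiation preserves the value and merely trades a one-form slot for the second two-form slot. This is what pushes every derivative contribution, and the full quadratic term, strictly above homogeneity $\ell$, leaving the purely algebraic expression as the leading-order change. The horizontality remark in the second step is the other point that genuinely needs care, since without it the two curvature functions would be evaluated on different frames and could not be compared termwise.
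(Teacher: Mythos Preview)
Your proof is correct and follows essentially the same route as the paper: expand $\tilde\kappa-\kappa$ via the structure equation into $\operatorname{d}\Phi+[\omega,\Phi]+\tfrac12[\Phi,\Phi]$ and then sort the terms by homogeneity to isolate the algebraic bracket $[X,\phi(Y)]-[Y,\phi(X)]$ as the unique lowest-order contribution. Your explicit justification, via horizontality of both curvature two-forms, for evaluating $\tilde\kappa$ on the $\omega$-constant frame rather than the $\tilde\omega$-constant frame is a point the paper glosses over (it simply works with arbitrary vector fields $\xi,\eta$ and then ``writes the vector fields as functions on $\mathcal G$ with the help of $\omega$''), so your version is if anything a little more careful there.
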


\begin{proof}
Considering vector fields $\xi$, $\eta\in T\mathcal G$,
$$
\tilde\omega(\xi) = \omega(\xi) + \phi(\omega(\xi))
.$$
Thus, hitting the equation with the exterior derivative, we obtain
$$
\operatorname{d}\tilde\omega(\xi,\eta) = \operatorname{d}\omega(\xi,\eta) +
\operatorname{d}\phi(\xi)(\omega(\eta)) -
\operatorname{d}\phi(\eta)(\omega(\xi))
+\phi(\operatorname{d}\omega(\xi,\eta)) 
,$$
while  
$$
[\tilde\omega(\xi),\tilde\omega(\eta)] = [\omega(\xi),\omega(\eta)] +
[\phi(\omega(\xi)),\omega(\eta)] + [\omega(\xi),\phi(\omega(\eta))] +
[\phi(\omega(\xi)),\phi(\omega(\eta))].
$$
Comparing the curvatures (as $\mathfrak g$-valued two forms on $\mathcal G$),
\begin{align*}
(\tilde\kappa-\kappa)(\xi,\eta) &= \operatorname{d}\phi(\xi)(\omega(\eta)) -
\operatorname{d}\phi(\omega(\eta))(\xi) + \phi(\operatorname{d}\omega(\xi,\eta))
\\
&\qquad - [\phi(\omega(\xi)),\omega(\eta)] + [\omega(\xi),\phi(\omega(\eta))]
+ [\phi(\omega(\xi)),\phi(\omega(\eta))]
.\end{align*}
Now, inspecting the homogeneities for $\phi$ valued in $\mathfrak g_i$ ($i=0$
corresponds to homogeneity $1$, while $i=1$ yields homogeneity $2$), the
first three terms will land in $\mathfrak g_i$, while the very last term is
either zero (if $i=1$) or sits in $\mathfrak g_i$ again (if $i=0$). Thus
only the two remaining brackets have got the values in $\mathfrak g_{i-1}$ 
and we obtain just the requested result if we write the vector fields as
functions on $\mathcal G$ with the help of $\omega$. 
\end{proof}
  
%
%
%

\subsection{Homology and cohomology}

The formula for the lowest homogeneity deformation of the curvature is a
special instance of a general algebraic construction, which 
works for arbitrary Lie algebra $\mathfrak{g}$ and $\mathfrak{g}$-module 
$\mathbb{V}$. We define the $k$-chains $C_k (\mathfrak{g}, \mathbb{V} )$ as
$$
C_k (\mathfrak{g}, \mathbb{V} ): = \Lambda^k \mathfrak{g} \otimes
\mathbb{V}
.$$ 
For each $k>0$ we define the linear operator $\delta_k \colon C_k \to C_{k-1}$
\begin{align*}
    \delta_k ( X_1 \wedge \dots \wedge X_k \otimes v) 
& = \sum_i (-1)^i \underbrace{X_1  \wedge \dots \wedge X_k}_{\text{omit $i$-th}} \otimes X_i \cdot v  \\ 
        & + \sum_{i < j} (-1)^{i+j} [X_i, X_j] \wedge 
\underbrace{ X_1  \wedge \dots \wedge X_k}_{\text{omit $i$-th, $j$-th}} \otimes
v\, .
\end{align*}
Then $\delta^2 = 0$ and thus $\delta$ acts on the chain complex 
$C (\mathfrak{g}, \mathbb{V})$ as a boundary operator. A direct check
reveals that $\delta$ is always a $\mathfrak g$-module homomorphism. 
Therefore we can define
the homology groups 
\[ 
H_k (\mathfrak{g}, \mathbb{V})  = \frac{\operatorname{ker} 
\delta_{k}}{\operatorname{im} \delta_{k+1}}
\] 
and they are again $\mathfrak{g}$-modules.

Note that $C_0 (\mathfrak{g}, \mathbb{V}) = \mathbb{V}$ 
and $C_1 (\mathfrak{g}, \mathbb{V}) \xrightarrow{\delta_1} 
C_0 (\mathfrak{g}, \mathbb{V})$ is given by $\delta_1 (X\otimes v) = 
X \cdot v$ which implies $H_0 (\mathfrak{g}, \mathbb{V}) = \mathbb{V}/ \langle X \cdot v 
\rangle = \mathbb{V} / \mathfrak{g} \cdot \mathbb{V}$.

In particular, considering the adjoint representation, 
$H_0(\mathfrak g,\mathfrak g) = \mathfrak g/[\mathfrak g,\mathfrak g]$.

Similarly to the homology, we can consider the dual construction 
for cochains 
$C^k (\mathfrak{g}, \mathbb{V})  = 
\Lambda^k \mathfrak{g}^* \otimes \mathbb{V}$ 
and coboundaries 
$\partial_k \colon C^k (\mathfrak{g}, \mathbb{V}) 
\to C^{k+1} (\mathfrak{g}, \mathbb{V})$ given by
\begin{align*}
    \partial_k \varphi ( X_0 , \dots , X_k \otimes v)   & = \sum_i (-1)^i X_i \varphi ( \underbrace{X_0 , \dots , X_k}_{\text{omit $i$-th}} ) \otimes X_i \cdot v  \\ 
        & + \sum_{i < j} (-1)^{i+j} \varphi ([X_i, X_j] ,\!\!\!\! 
\underbrace{X_1  , \dots , X_k}_{\text{omit $i$-th and $j$-th}}\!\!\!\!) \otimes v \ .
\end{align*}
Then $\partial$ provides a coboundary operator on the complex of cochains, 
i.e. $\partial^2 = 0$. The operators $\partial$ are again
$\mathfrak g$-module homomorphisms and we define the cohomology groups 
\[ 
H^k ( \mathfrak{g}, \mathbb{V})  =  \frac{\operatorname{ker} \partial_k}{\operatorname{im} \partial_{k-1}} .
\]

Again, the zero cohomology is easy to compute. Clearly $\partial_0(v)(X_0)=
X_0\cdot v$, while 
$$
\partial_1\psi(X,Y) = X\cdot\psi(Y) - Y\cdot\psi(X) -
\psi([X,Y]).
$$
Thus, $H^0 ( \mathfrak{g}, \mathbb{V}) = \mathbb{V}^{\mathfrak{g}} 
\subset \mathbb{V}$ is the kernel of the $\mathfrak{g}$-action. 
If we choose $\mathbb{V} = \mathfrak{g}$ with the adjoint action 
then $H^1 ( \mathfrak{g}, \mathfrak{g}) = \{\text{all derivatives} \} / \{ \text{inner derivatives} \} $.

Now, the crucial observation is that Lemma \ref{curvature-deformation}
expresses the lowest homogeneity of the deformation of the curvature of our
Cartan geometries, caused by $\phi\in\mathfrak
g_{-1}^*\otimes\mathfrak g_i$, via the coboundary 
differential $\partial \phi$ (the third
term is not there in our case since we deal with $|1|$-graded geometries).

%

For general parabolic geometries we also consider the curvature as an
equivariant function $\kappa:\mathcal G\to C^2(\mathfrak g_{-}\otimes
\mathfrak g)$ and $\mathfrak g$ is a $\mathfrak g_-$-module with the 
adjoint action. Even
in full generality, the Lemma \ref{curvature-deformation} holds true, i.e.
the lowest homogeneity of the curvature deformation caused 
by $\phi$ is given by
$\partial \phi$, see \cite[section 3.1.10]{B}.



\subsection{Normalization of parabolic geometries}

We should be interested in the cohomologies $H^k(\mathfrak
g_-,\mathfrak g)$, in particular in the second degree since the curvature
has got the values in the second degree cochains.
Recall Lemma \ref{curvature-deformation} which discussed how all possible 
deformations of the Cartan curvature (with positive homogeneities) impact
the curvature. In particular, we learned there that the available
deformation of the curvature fill the image of $\partial$ in
the second degree cochains (in the lowest non-trivial homogeneity). 

Now the crucial moment comes. Consider parabolic geometries with the
homogeneous model $G/P$, $\mathfrak g= \mathfrak g_{-}\oplus\mathfrak g_0
\oplus \mathfrak p_+$ and a $\mathfrak g$-module $\mathbb V$. Recall $(\mathfrak{g} / \mathfrak{p})^* \cong \mathfrak{g}_{-}^* \cong 
\mathfrak{p}_+$. Thus, the dual of
the space of cochains $C^k(\mathfrak g_-,\mathbb V)$ is $C^k(\mathfrak
p_+,\mathbb V^*)$ and there is the dual mapping $\partial^*:C^{k+1}(\mathfrak
p_+,\mathbb V^*)\to C^{k}(\mathfrak
p_+,\mathbb V^*)$. 
It was Kostant who noticed in his celebrated paper
\cite{Kost}, that there always is a scalar product 
$\langle \ , \ \rangle$ on the space of cochains 
$C^{k}(\mathfrak
p_+,\mathbb V^*)$ such that, identifying $C^k(\mathfrak p_+,\mathbb V^*)$ with
$C^k(\mathfrak g_-,\mathbb V)$, the latter dual map $\partial^*$ becomes the
adjoint operator to $\partial$. Moreover its formula then coincides with the
boundary operator $\delta$. We shall follow the (confusing) convention by many
authors and call this adjoint $\partial^*$ the \emph{codifferential}. 
In particular, $\partial^*$ is a $P$-module homomorphism.

Now,  we equivalently consider 
$$
H^k(\mathfrak{g}_-, \mathbb V) =
H^k(\mathfrak p_+,\mathbb V^*) = \frac{\operatorname{ker}\partial^* }
{\operatorname{im}\partial^*}
$$ 
and, applying the standard algebraic Hodge
theory, we get the decompositions (of $G_0$-modules)
\begin{equation}\label{Hodge-decomposition} 
C^k(\mathfrak{g}_-, \mathbb V) = \operatorname{im}\partial^* \oplus
\operatorname{ker}\partial = \operatorname{ker}\partial^* \oplus
\operatorname{im}\partial = \operatorname{im} \partial^* \oplus \operatorname{ker} \Box \oplus \operatorname{im} \partial ,
\end{equation}
where $\Box \equiv \partial \partial^* + \partial^* \partial$ (thus the
intersection of the kernels of $\partial$ and $\partial^*$). This means that
the cohomology $H^k(\mathfrak p_+,\mathbb V^*)=H^k(\mathfrak g_-,\mathbb V)$ 
equals to the kernel of the
algebraic Hodge Laplacian operator $\Box$. 

Further, we see that $\operatorname{ker}\partial^*$ is always the
complementary subspace to $\operatorname{im}\partial$ and in view of Lemma
\ref{curvature-deformation} we adopt the following normalization.

Notice $\partial^*$ is a $P$-module homomorphism and so it induces natural
transformations between the corresponding natural bundles. In particular, it
makes sense to apply $\partial^*$ to the curvatures of our Cartan
connections, i.e. there is the natural algebraic operator
$$
\partial^*: \Lambda^2T^*M\otimes\mathcal AM \to
T^*M\otimes \mathcal AM
$$
which preserves the homogeneities.

\begin{definition}
Let $(\mathcal G\to M, \omega)$ be a parabolic geometry with the homogeneous
model $G\to G/P$, $\mathfrak g = \mathfrak g_{-k}\oplus\cdots\oplus\mathfrak g_k$. 
We say that $\omega$ is a \emph{regular parabolic geometry}, if its
curvature $\kappa$ has got only positive homogeneities. The geometry is
called \emph{normal}, if its curvature is co-closed, i.e. $\partial^*\kappa=0$.

\end{definition}

Let us stress the following observation. 
The curvature of any normal parabolic geometry lies in the kernel of
$\partial^*$ and thus it projects to the natural bundle defined by
the cohomology $H^2(\mathfrak g_-,\mathfrak g)$. This is the so called
\emph{harmonic curvature} $\kappa^H \in \mathcal G\times_P H^2(\mathfrak
g_-,\mathfrak g)$.  

Let us restrict again our attention to $|1|$-graded geometries. First
notice, the regularity condition is empty in this case. Indeed, the
decomposition of $\kappa$ into its
homogeneity components coincides with the decomposition by its values, i.e.
values in $\mathfrak g_{i}$ are of homogeneity $i+2$, $i=-1$, $0$, $1$.

Further, there is a nice consequence of the Bianchi identity \eqref{Bianchi2}.
Consider the component $\kappa_i$ of the lowest homogeneity $\ell$. Then the
four terms in \eqref{Bianchi2} are of homogeneity at least, $\ell-1$,
$\ell-1$, $\ell$, $\ell$, respectively. But each homogeneity component in
\eqref{Bianchi2} has to vanish independently. Finally, the first two
terms represent exactly the differential $\partial \kappa$.   
      
We conclude that the lowest homogeneity non-zero component of the 
curvature should be
closed and thus, for normal geometries it must coincide with its harmonic
projection. Moreover, if all these harmonic components are zero, then we
conclude (by induction using the previous result) that the
entire curvature $\kappa$ must vanish, too. 
These results hold true even for
general parabolic geometries, the reader may
consult \cite[section 3.1.12]{B}.


Now we are ready to manage the normalization of the $|1|$-graded parabolic
geometries with $\mathfrak g=\mathfrak g_{-1}\oplus\mathfrak g_0\oplus
\mathfrak g_1$. Given any $G_0$-principal bundle $\mathcal G_0\to M$ with the
soldering form $\theta\in\Omega^1(M,\mathfrak g_{-1})$, i.e. a classical
$G_0$-structure, we consider the fiber bundle $\mathcal G=\mathcal
G_0\times \operatorname{exp}\mathfrak g_1$ and equip it with the obvious 
principal action of $P=G_0\ltimes \operatorname{exp}\mathfrak g_1$. 

If we choose any principal connection $\gamma$ on $\mathcal G_0$, then
$\theta\oplus \gamma$ is a Cartan connection on $\mathcal G_0\subset
\mathcal G$ and
choosing any $\Rho\in \Omega^1(M,T^*M)$, there is exactly one Cartan
connection $\omega$ on $\mathcal G$ coinciding with $\theta\oplus \gamma
\oplus \Rho$ on $T\mathcal G_0\subset T\mathcal G$. 

The connection is automatically regular and the lowest component of its
curvature can have homogeneity $1$. It is a simple exercise to see that
this component will coincide with the torsion $T$ of the connection $\gamma$
(e.g. viewed as the torsion part of the curvature of the Cartan connection
$\theta \oplus \gamma$). Moreover, changing the inclusion of $\mathcal
G_0\to \mathcal G$, i.e. choosing a Weyl connection for $\omega$, this
torsion part does not change at all.

We know that for the normal Cartan connections, this torsion has to coincide
with its harmonic part. Moreover, Lemma \ref{curvature-deformation} says
that we can modify the Cartan connection $\omega$ by a homogeneity one
deformation $\Phi$ so that this condition will be satisfied. 

In fact, this only recovers the very classical results about the
distinguished connections with special torsions on $G$-structures.  

For example, in conformal Riemannian geometry, there is no cohomology in
homogeneity one and thus we may always find torsion free connections. This
is, of course, no surprise since we may take any Levi-Civita connection of
one of the metrics in the class. But for the almost Grassmannian geometries
with $p\ge q\ge 3$, all the cohomology appears in homogeneity one only (with
two irreducible components) and
thus connections with torsions are unavoidable in general, unless we deal
with the homogeneous models. 

Next, we may assume that we have chosen the above connection $\gamma$ in
such a way, that its torsion is harmonic. In order to see the link between
the curvature of $\gamma$ and the curvature $\kappa$ of $\omega$, consider
the Cartan connection $\tilde\omega$ on $\mathcal G$ which would be given by
the choice $\Rho=0$. The Cartan connections $\theta\oplus \gamma$ and
$\tilde\omega$ are related by the inclusion $\mathcal G_0\to \mathcal G$ and
thus the curvature $\tilde \kappa$, restricted to $\mathcal G_0$ coincides
with the curvature $T+R$ of $\theta\oplus\gamma$. Thus, Lemma
\ref{curvature-deformation} says (with the deformation
$\Rho=\omega-\tilde\omega$) that the homogeneity two component of the
curvature of $\omega$ is
\[ 
\kappa_0 = R + \partial \Rho. 
\] 
Hitting this equality by $\partial^*$ gives 
\[ 
\partial^* \kappa_0 = \partial^* R + 
\partial^* \partial \Rho 
.\] 
But by homogeneity argument, $\partial^*\Rho$ would have values in
$\mathfrak g_2$ and thus vanishes automatically. Thus, the second term in
the latter equation equals $\Box \Rho$ and the normalization
condition will be satisfied if we choose $\Rho$ such that  
\begin{equation}\label{normal-Rho}
\Box \Rho  = - \partial^* R 
.\end{equation}
The final crucial observation is that the Laplacian acts by non-zero
constant multiples on all irreducible components, except the harmonic ones.
But we want to invert $\Box$ on $\operatorname{im}\partial^*$, which cannot
include any harmonic components. The final formula for $\Rho$ is
\begin{equation}\label{final-Rho}
\Rho = -\Box^{-1}\partial^*R
.\end{equation}

Summarizing, in order the construct the normal Cartan connection $\omega$ on
a manifold equipped with the relevant $G_0$-structure, we first choose
any connection $\gamma$ with harmonic torsion. Then we consider its
curvature $R$, apply the codifferential and compute the right coefficients
for each of its irreducible components. There are effective tools in the
representation theory allowing to compute them easily via the so called
Casimir operators. We have no space to go into details here.

Finally, there is the question about the uniqueness of our construction. The
answer is again hidden in cohomologies. If there are no positive homogeneity
components in $H^1(\mathfrak g_{-1},\mathfrak g)$, all our choices of the
deformations in both steps were unique. This is the case for nearly all
$|1|$-graded geometries. The only exceptions are the projective geometries
(and their complex versions), where we have to choose one of the connections
in the first step to define the structure. Then the Cartan connection is
already given uniquely via the next step in our construction.  

In the categorical language, there is the subcategory of the regular and
normal Cartan geometries, and this subcategory is equivalent to the category
of the infinitesimal $G_0$-structures on manifolds, up to some rare exceptions
due to the existence of positive homogeneities in first cohomologies in some
examples (where a similar equivalence exists, too). 

In conformal Riemannian geometry, i.e. $\mathfrak g=\mathfrak{so}(n+1,1)$,
there is no positive homogeneity first cohomology, while the entire second
cohomology is concentrated in homogeneity two (except of dimension $n=3$,
where it is homogeneity three). The operator  $\partial^*$ is just the
trace, so the image on the curvature of a Levi-Civita connection is the
Ricci tensor. The formula for $\Rho$ reflects the right choices of the
constants in the action of $\Box$, while the invariant Weyl part of the
curvature (shared by all Weyl connections) 
is $R + \partial \Rho$, the harmonic component in all
dimensions $n>3$. Of course, the geometry is locally isomorphic to the
conformal sphere if and only if this Weyl curvature vanishes.

We do not have space in this lecture to inspect further examples and
detailed computations. The readers may
look up many of them in \cite{B}, a few hundreds of pages of examples and
details for general parabolic geometries are there in chapters 3 through 5.
 

\section{The BGG machinery}

As well known, the linearized theories in Physics usually appear as locally
exact complexes of differential operators. A lot of attention was devoted to
this phenomenon in Mathematics, too. Already in the early days people around
Gelfand or Kostant knew that on the Klein models, the existence of such
complexes is an algebraic phenomenon related to homomorphisms of Verma
modules (which were understood as topological duals of the infinite jet
prolongations of the natural bundles), cf. \cite{BGG1,BGG2}.

The main message of this series of lectures is to show how remarkably
the algebraic features and phenomena from the Klein models extend to the
categories of Cartan geometries. The so called BGG machinery does exactly
this -- extends the complexes of the differential operators from the
homogeneous models to sequences on all Cartan geometries of the given type. 

In this last lecture we comment on this exciting development and we 
shall also come back to
the solutions of the `conformal to Einstein' equation \eqref{Einstein
condition} in terms
of constant tractors. On the way we shall touch the general construction 
of the latter sequences of operators and identify the equation \eqref{Einstein
condition} as one of the so called 1st BGG operators.

\subsection{The twisted de-Rham complexes}

Denote by $H^k_\mathbb{V} M$ the natural bundle associated to the $P$-module 
$H^k(\mathfrak{g}_-, \mathbb{V} ) $ of cohomologies with coefficients in a 
$G$-module $\mathbb{V}$. Notice, that by the Kostant's complete description
of the cohomologies, \cite{Kost}, the latter cohomology module is a $G_0$ module with
trivial action of $P_+$ and thus, it is completely reducible. 
In particular, $H^0_\mathbb{V} M$ is the bundle coming from the projecting
part of $\mathbb V$ which can be viewed as the orbit of the lowest
weight vector in $\mathbb{V}$ under the $\mathfrak{g}_0$-action. 
Our goal is to come to the following diagram of operators 
\begin{equation}
    \begin{tikzcd}\label{diagram1}
& \Omega^0(M, \mathcal{V}M ) 
\arrow{r}{\operatorname{d}_\mathbb{V}} \arrow[swap, xshift=-3pt]{d}{\pi} 
& \Omega^1(M, \mathcal{V}M ) \ar[swap, xshift=-3pt]{d}{\pi} 
\arrow{r}{\operatorname{d}_\mathbb{V}} & \dots  \\
    &  H^0_{\mathbb{V}} M \ar[dashed]{r}{\operatorname{D}} \ar[swap,xshift=3pt]{u}{\operatorname{L}} &  H^1_{\mathbb{V}} M \ar[swap, xshift=3pt]{u}{\operatorname{L}}  \ar[dashed]{r}{\operatorname{D}} & \dots
    \end{tikzcd}
\end{equation}    
where all the arrows have to be yet explained. As usual, we write $\mathcal{V}M$ 
for the tractor bundle over the manifold $M$ corresponding to
$\mathbb V$, and notice that $\partial^*$ is the adjoint of $\partial$ which
is a $P$-module homomorphism and thus, it gives rise to the natural algebraic
operator $\partial^*:\Omega^k(M, \mathcal{V}M)\to \Omega^{k-1}(M,\mathcal VM)$. 
Clearly, the projections $\pi$ are well defined only on the kernel of
$\partial^*$. We shall have to be careful about this.

The ideas presented below go back to \cite{Baston} and
\cite{BEastwood}, and they were further developed in \cite{C}.

Let us discuss the upper line in \eqref{diagram1} now.
First, restrict to the parabolic Klein model $G \to G/P$. Together with the
$G$-module $\mathbb{V}$, consider a $P$-module $\mathbb{W}$. 
Then there is the following identification of the sections of the 
tensor product bundle  $\mathcal{V}\otimes\mathcal{W}$. For any section
$s$ of
$\mathcal W$, i.e. an equivariant mapping $s:\mathcal G\to \mathbb W$, 
and $v\in\mathbb V$ consider the map
\[ 
s \otimes v \mapsto (\underbrace{g \mapsto s(g) \otimes g^{-1} 
\cdot v}_{\text{equivariant\ } G \to
\mathbb{W} \otimes \mathbb{V}} ), 
\]
which provides a natural isomorphism of the $G$-modules of sections 
\begin{equation}\label{tensor-iso}
\Gamma (\mathcal{W} ) \otimes \mathbb{V} 
\cong \Gamma ( \mathcal{W} \otimes \mathcal{V} )
.\end{equation}

Thus, if $F \colon \mathcal{W}_1 \to \mathcal{W}_2 $ is an arbitrary  
differential operator between the homogeneous vector bundles, 
then $F \otimes \operatorname{id}_{\mathbb{V}} = 
F_{\mathbb{V}} $ provides the \textit{twisted operator} $F_{\mathbb V} 
\colon \mathcal{W}_1\otimes \mathcal V \to \mathcal{W}_2 \otimes \mathcal
V$.

Considering the exterior differential $\operatorname{d}:
\Lambda^kT^*M\to \Lambda^{k+1}T^*M$,
this explains the whole first line in \eqref{diagram1}, at least on the
homogeneous model. On zero-degree forms, the exterior differential
is just the covariant derivative of the sections.
  
Let us look more
carefully on this example. At the level of first order jets, we can
express the twisted operator by means of the algebraic $P$-homomorphism
\begin{equation}\label{d-homo}
J^1 (\Lambda^k{\mathfrak{p}_+} \otimes \mathbb{V} ) \to 
\Lambda^{k+1}{\mathfrak{p}_+} \otimes \mathbb{V}, \quad
( f_0, Z \otimes f_1) \mapsto \partial f_0 + (k+1) Z \wedge f_1 
.\end{equation}

In general, if we write ${J}^r (\mathbb{W})$ and $\Bar{J}^r (\mathbb{W})$ 
for the standard fibers of the holonomic and semi-holonomic jet prolongations 
$J^r(\mathcal W)$, 
$\Bar J^r(\mathcal W)$,\footnote{We iterate the first jet prolongation.
Considering the first jets of sections of a bundle $\mathcal W$, the
jets in a fiber of $J^1(J^1\mathcal W)$ look in coordinates as 4-tuples
$(y^p,y^p_i,Y^p_j,Y^p_{ij})$ were $Y^p_{ij}$ do not need to be symmetric. 
These are the non-holonomic 2-jets. The
semi-holonomic ones remove part of the redundancy by requesting that the two
natural projections to 1-jets coincide, i.e. $y^p_i=Y^p_i$. This
construction extends to all orders and the semi-holonomic jets look in
coordinates nearly as the holonomic ones, just loosing the symmetry of the
derivatives. See e.g. \cite{KMS} for detailed exposition.} 
then the isomorphism \eqref{tensor-iso}
must hold true at the jet level, e.g. $\Bar{J}^r (\mathbb{W}) \otimes 
\mathbb{V} \cong \Bar{J} (\mathbb{W} \otimes \mathbb{V})$. 

Now the crucial observation comes: Although the jet prolongations
$J^r\mathcal W$ are no more natural bundles associated to $\mathcal G$ in
general, there is still no problem with the first jets. Thus, $J^1(\mathcal
W)=\mathcal G\times_P J^1(\mathbb W)$ and iterating this procedure, we
conclude that the semi-holonomic jet prolongations are natural bundles
again, i.e., $\bar J^r(\mathcal WM) = \mathcal G\times_P \bar J^r(\mathbb W)$
for the relevant $P$-module $\bar J^r(\mathbb W)$ (the standard fiber over
the origin in $G/P$ as the module with the action of the isotropy group $P$).
Moreover, we can construct a universal differential
operator $\mathcal WM\to \bar J^r(\mathcal WM)$ based on the iterated
fundamental derivative, which allows one to extend many invariant
operators from the homogeneous model to all Cartan geometries of this
type. 

Therefore, the so called \emph{strongly invariant operators}, i.e.
those coming from algebraic $P$-module homomorphisms $\bar J^r(\mathbb W_1)
\to \mathbb W_2$, enjoy a canonical extension to all Cartan geometries 
by means of the formulae obtained on the homogeneous model.

A careful
exposition of the algebraic structure of the semiholonomic jets and their
links to the strongly invariant operators can be found
in \cite{L}. 

This in particular applies for all first order operators and we are done
with the first line in \eqref{diagram1}, which is called the \emph{twisted
de-Rham} sequence. Obviously, there are many other ways for twisting the
de-Rham. For example, we could take the covariant exterior differential
$\operatorname{d}^{\omega}$ of
the tractor valued $k$-forms with respect to the tractor connection on
$\mathcal V$. A straightforward computation reveals
\begin{equation}\label{cov-dif}
\operatorname{d}^{\omega}{\varphi} = \operatorname{d}_{\mathbb{V}}\varphi + \iota_{\kappa_-} \varphi \ , 
\end{equation}
where $\kappa_-$ is the torsion part of the curvature $\kappa = \operatorname{d} \omega + \frac{1}{2} [ \omega, \omega]$. 

\subsection{BGG machinery}

Next, let us focus on the vertical arrows in \eqref{diagram1}. We already
know about the projections $\pi$, so we have to deal with $\operatorname{L}$'s.
\begin{equation*}
    \begin{tikzcd}
& \Omega^0(M, \mathbb{V} ) \arrow{r}{\operatorname{d}_\mathbb{V}} \arrow[swap, xshift=-3pt]{d}{\pi} & \Omega^1(M, \mathbb{V} ) \ar[swap, xshift=-3pt]{d}{\pi} \arrow{r}{\operatorname{d}_\mathbb{V}} & \dots  \\
    &  H^0_{\mathbb{V}} M  \ar[swap,xshift=3pt]{u}{\operatorname{L}} &  H^1_{\mathbb{V}} M \ar[swap, xshift=3pt]{u}{\operatorname{L}} 
    \end{tikzcd}
\end{equation*} 

The quite straightforward idea is to seek for differential operators
$\operatorname{L}$, such that $\operatorname{d}_{\mathbb V}\circ
\operatorname{L}$ are requested to be algebraically co-closed. Then the composition with the projection
$\pi$ makes sense and we could arrive at operators 
$\operatorname{D}$ 
 \[
 \begin{tikzcd}
  H^k_{\mathbb{V}} M \ar[dashed,
yshift=2pt]{rr}{\operatorname{D}=\pi\circ\operatorname{d}_{\mathbb V}\circ\operatorname{L}}
&{}&
H^{k+1}_{\mathbb{V}} M .
 \end{tikzcd}
\]

The most important (and demanding) step in the original
construction of the sequence of those operators in \eqref{diagram1} was the
following lemma in \cite{C}. Notice, \cite{I} suggests a different and more efficient
construction of these operators.

\begin{lemma}
On each irreducible component of $H^k_{\mathbb V}M$,  there is 
the unique strongly invariant operator 
$\operatorname{L}$ with values in $\operatorname{ker}\partial^*$ and 
splitting the projection $\pi$, 
\[
 \begin{tikzcd}
   H^k_{\mathbb{V}} M \ar[yshift=2pt]{r}{\operatorname{L}} & \Omega^k(M, \mathcal{V} ) \ar[yshift=-2pt]{l}{\pi} \ , 
 \end{tikzcd}
\]
such that $\operatorname{d}_{\operatorname{V} } \circ L \in 
\operatorname{ker} \partial^*$.
\end{lemma}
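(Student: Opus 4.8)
The plan is to characterize $\operatorname{L}$ by three properties and then build it by an iteration governed by the algebraic Hodge theory of \eqref{Hodge-decomposition}. Recall that this decomposition gives $\ker\partial^* = \operatorname{im}\partial^* \oplus \ker\Box$ with $\ker\Box \cong H^k_{\mathbb V}M$, that the projection $\pi$ is the quotient $\ker\partial^* \to \ker\partial^*/\operatorname{im}\partial^*$ killing $\operatorname{im}\partial^*$ and restricting to the identity on $\ker\Box$, and that $\Box$ acts invertibly, by a nonzero constant on each $G_0$-irreducible summand, on the non-harmonic part $\operatorname{im}\partial \oplus \operatorname{im}\partial^*$. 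It is precisely because these constants differ between the irreducible components that one must argue component by component, which is why the statement is phrased "on each irreducible component". I would look for $\operatorname{L}\colon\Gamma(H^k_{\mathbb V}M)\to \Omega^k(M,\mathcal V)$ satisfying $\partial^*\operatorname{L}=0$ (values in $\ker\partial^*$), $\pi\circ\operatorname{L}=\operatorname{id}$ (splitting $\pi$), and $\partial^*\operatorname{d}_{\mathbb V}\operatorname{L}=0$ ($\operatorname{d}_{\mathbb V}\operatorname{L}$ co-closed); these are exactly the assertions of the lemma.

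The key structural input is that, after the choice of a Weyl connection, $\operatorname{d}_{\mathbb V}$ respects the homogeneity filtration and its leading, homogeneity-preserving part is exactly the algebraic differential $\partial$, while the remaining terms --- the genuine covariant derivative together with the $\Rho$-correction and, via \eqref{cov-dif}, the algebraic contribution $\iota_{\kappa_-}$ --- strictly raise homogeneity; compare \eqref{d-homo} and Theorem \ref{derivatives-parabolic}. Both $\partial$ and $\partial^*$, and hence $\Box$ and $\Box^{-1}$ on the non-harmonic part, preserve homogeneity. With this in hand I would construct $\operatorname{L}$ iteratively. First start with $\operatorname{L}_0$ the harmonic inclusion $H^k_{\mathbb V}M\hookrightarrow \ker\Box\subset\ker\partial^*$, so that $\partial^*\operatorname{L}_0=0$ and $\pi\circ\operatorname{L}_0=\operatorname{id}$ already hold, the only obstruction being that $\partial^*\operatorname{d}_{\mathbb V}\operatorname{L}_0$ lies in $\operatorname{im}\partial^*$ with strictly positive homogeneity. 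I would then correct by a term in $\operatorname{im}\partial^*$, which leaves both $\partial^*\operatorname{L}$ and $\pi\circ\operatorname{L}$ untouched: passing to $\operatorname{L}_0-\Box^{-1}\partial^*\operatorname{d}_{\mathbb V}\operatorname{L}_0$ and using that on $\operatorname{im}\partial^*$ one has $\partial^*\operatorname{d}_{\mathbb V}=\Box+(\text{higher homogeneity})$ kills the lowest surviving homogeneity of the obstruction. Iterating, each step removes one more homogeneity, and since $\mathbb V$ is finite dimensional only finitely many homogeneities occur, so the procedure terminates with the desired $\operatorname{L}$.

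For strong invariance I would observe that every building block of the construction --- the harmonic inclusion, the algebraic bundle maps $\partial$, $\partial^*$ and $\Box^{-1}$ (all induced by $P$-module homomorphisms, hence natural on all Cartan geometries of the type), and $\operatorname{d}_{\mathbb V}$ itself (a first order and, by the earlier discussion, strongly invariant operator) --- is strongly invariant in the sense explained above; a composite of strongly invariant operations is strongly invariant, so $\operatorname{L}$ inherits the property and is given by the universal formula obtained on the homogeneous model. For uniqueness, suppose $\operatorname{L}$ and $\operatorname{L}'$ both enjoy the three properties and set $\Delta=\operatorname{L}-\operatorname{L}'$. Then $\partial^*\Delta=0$ together with $\pi\Delta=0$ forces $\Delta$ to take values in $\operatorname{im}\partial^*$, while $\partial^*\operatorname{d}_{\mathbb V}\Delta=0$; running the same homogeneity argument on the lowest nonzero homogeneity of $\Delta$, using the invertibility of $\Box$ on $\operatorname{im}\partial^*$, then forces $\Delta=0$.

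The step I expect to be the main obstacle is the verification of strong invariance: one has to know precisely that $\operatorname{d}_{\mathbb V}$ and the auxiliary algebraic maps descend from $P$-homomorphisms on the semi-holonomic jet modules, so that the inductively assembled formula transports canonically from $G/P$ to every Cartan geometry of this type --- this is the content hidden in the references \cite{C,L} and is considerably more delicate than the purely algebraic iteration. A secondary technical point, needed to make the iteration rigorous, is the clean identification of $\partial$ as the leading term of the filtered operator $\operatorname{d}_{\mathbb V}$ and the resulting bookkeeping of homogeneities.
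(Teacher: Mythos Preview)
The paper does not prove this lemma at all: immediately after the statement it writes ``The proof in \cite{C} is very technical and there are many later improvements in the literature, starting with \cite{I},'' and moves on. Your proposal therefore goes well beyond what the paper offers, and the iterative scheme you sketch --- start from the harmonic inclusion and repeatedly subtract $\Box^{-1}\partial^*\operatorname{d}_{\mathbb V}(\cdot)$ until the obstruction vanishes by finite-dimensionality of $\mathbb V$ --- is essentially the construction of Calderbank--Diemer \cite{I}, which the paper explicitly names as the more efficient alternative to the original argument of \cite{C}. Both your existence iteration and your uniqueness argument are correct in outline.

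One small correction worth flagging: you list the harmonic inclusion $\ker\Box\hookrightarrow\ker\partial^*$ among the maps ``induced by $P$-module homomorphisms''. It is not; the Hodge splitting \eqref{Hodge-decomposition} is only a $G_0$-module decomposition, so this inclusion depends on a choice of Weyl structure. This does not break your argument --- you already single out strong invariance as the genuine obstacle and defer to \cite{C,L} --- but it does mean invariance of $\operatorname{L}$ is not a purely formal consequence of composing natural pieces. One really has to verify that the assembled formula factors through a $P$-homomorphism on the semi-holonomic jet module, and that is precisely the ``very technical'' step the paper declines to reproduce.
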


The proof in \cite{C} is very technical and there are many later
improvements in the literature, starting with \cite{I}.

The resulting sequence 
of operators
\[
 \begin{tikzcd}
 H^0_{\mathbb{V}} M \ar[dashed]{r}{\operatorname{D}_0} & H^1_{\mathbb{V}} M \ar[dashed]{r}{\operatorname{D}_1} & H^2_{\mathbb{V}} M \ar[dashed]{r}{\operatorname{D}_2} & \dots
 \end{tikzcd}
\]
is called the \emph{BGG sequence} associated with the tractor bundle
$\mathcal V$.

\begin{theorem}
For each $G$-module $\mathbb V$, the BGG sequence is well defined on each
Cartan geometry modelled on $G/P$ and it restricts to the celebrated 
BGG resolution on the homogeneous model. 

If the twisted de-Rham sequence on a Cartan geometry is a complex, 
then also the BGG sequence is a complex, and they both compute the same
cohomology of the underlying manifold.
\end{theorem}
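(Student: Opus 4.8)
The plan is to realize the entire diagram \eqref{diagram1} as the output of algebraic Hodge theory on the fibrewise Lie-algebra-cohomology complex, perturbed by the genuinely differential part of the twisted de Rham operator. First I would note that, since $\Lambda^kT^*M\otimes\mathcal VM=\mathcal G\times_P C^k(\mathfrak g_-,\mathbb V)$, the space $\Omega^k(M,\mathcal VM)$ carries two natural operators: the algebraic coboundary $\partial$ with its codifferential $\partial^*$ (both induced by $P$-homomorphisms, as in \eqref{Hodge-decomposition}), and the twisted de Rham differential $\operatorname{d}_{\mathbb V}$. The structural observation is that, by the first-jet formula \eqref{d-homo}, the homogeneity-preserving part of $\operatorname{d}_{\mathbb V}$ is exactly $\partial$, while the covariant-derivative part $t:=\operatorname{d}_{\mathbb V}-\partial$ strictly raises the total homogeneity. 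Since $\partial^*$ preserves homogeneity and the homogeneities range over a finite set, $tG$ is nilpotent, where $G:=\partial^*\Box^{-1}$ is the algebraic Green operator of $\Box=\partial\partial^*+\partial^*\partial$.

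Next I would invoke the Hodge decomposition \eqref{Hodge-decomposition} to obtain a contraction of the fibrewise complex $(\Omega^\bullet(M,\mathcal VM),\partial)$ onto its cohomology $\Gamma(H^\bullet_{\mathbb V}M)\cong\Gamma(\mathcal G\times_P\operatorname{ker}\Box)$, with harmonic inclusion $\iota$, harmonic projection $q$ (which restricts to $\pi$ on $\operatorname{ker}\partial^*$, as $\operatorname{ker}\partial^*=\operatorname{im}\partial^*\oplus\operatorname{ker}\Box$), and homotopy $G$; the side conditions $q\iota=\operatorname{id}$, $\operatorname{id}-\iota q=\partial G+G\partial$, $G\iota=0$, $qG=0$, $G^2=0$ all follow from \eqref{Hodge-decomposition}. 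Under the hypothesis that the twisted de Rham sequence is a complex, i.e. $\operatorname{d}_{\mathbb V}^2=0$, the Homological Perturbation Lemma applies to the admissible perturbation $t$ and yields a perturbed contraction of $(\Omega^\bullet(M,\mathcal VM),\operatorname{d}_{\mathbb V})$ onto $(\Gamma(H^\bullet_{\mathbb V}M),D')$, with $\iota'=(\operatorname{id}-Gt)^{-1}\iota$ and $D'=q\,t\,(\operatorname{id}-Gt)^{-1}\iota$, the inverse being a terminating series by nilpotency. Being a contraction, this at once gives $D'^2=0$ and a homotopy equivalence, hence an isomorphism between the cohomology of the BGG sequence and that of the twisted de Rham complex, which is the asserted coincidence of cohomologies.

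It then remains to identify $D'$ with $\operatorname{D}=\pi\circ\operatorname{d}_{\mathbb V}\circ\operatorname{L}$, and here I would use the preceding splitting Lemma. Since $\iota$ lands in $\operatorname{ker}\Box$ and $G$ has values in $\operatorname{im}\partial^*$, the operator $\iota'$ takes values in $\operatorname{ker}\partial^*$, splits $\pi$ (for $\pi\iota'=q\iota'=\operatorname{id}$, the higher terms lying in $\operatorname{im}\partial^*=\operatorname{ker}q$), and satisfies $\partial^*\iota'=0$; the perturbed chain-map identity $\operatorname{d}_{\mathbb V}\iota'=\iota'D'$ then forces $\partial^*\operatorname{d}_{\mathbb V}\iota'=(\partial^*\iota')D'=0$. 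Thus $\iota'$ has the three properties characterizing the strongly invariant splitting operator, so by its uniqueness $\iota'=\operatorname{L}$ and $D'=q\operatorname{d}_{\mathbb V}\iota'=\pi\operatorname{d}_{\mathbb V}\operatorname{L}=\operatorname{D}$. Well-definedness on every Cartan geometry of type $G/P$ follows because $\partial,\partial^*,G,\iota,q$ are algebraic (hence natural) and $t$ is natural, so $\operatorname{L}$ and $\operatorname{D}$ are natural differential operators; on the homogeneous model the curvature vanishes, so $\operatorname{d}_{\mathbb V}^2=0$ holds automatically and the construction produces a complex whose identification with the classical (dual) BGG resolution of $\mathbb V$ is the algebraic statement of \cite{BGG1,BGG2,Kost}.

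The main obstacle is that everything above is formal \emph{once} one knows that $G=\partial^*\Box^{-1}$ is a well-defined natural operator, i.e. that the Kostant Laplacian $\Box$ acts by nonzero constants on every non-harmonic $G_0$-irreducible component and that the resulting splitting $\operatorname{L}$ is strongly invariant across all Cartan geometries. This is precisely the content of the difficult Lemma of \cite{C} (with the more efficient construction of \cite{I}), and it rests on Kostant's representation-theoretic determination of $H^\bullet(\mathfrak g_-,\mathbb V)$ and of the Laplacian eigenvalues; controlling these uniformly, together with the classical identification of the flat-model complex with the Verma-module BGG resolution, is where the real work lies.
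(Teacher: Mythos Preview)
Your proposal is substantially more than the paper offers: the paper does not prove this theorem at all, merely stating it after the splitting Lemma and pointing to \cite{C} and \cite{I} for the construction of $\operatorname{L}$. What you have sketched is essentially the Calderbank--Diemer approach of \cite{I}, which the paper alludes to only as ``a different and more efficient construction'' than the original argument of \cite{C}. So there is no meaningful comparison with the paper's own proof; you have supplied one where the paper gave none, and your last paragraph correctly locates the genuine difficulty in Kostant's theorem and the Lemma.

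One logical dependency deserves tightening. Your identification $\iota'=\operatorname{L}$ goes through the perturbed chain-map identity $\operatorname{d}_{\mathbb V}\iota'=\iota' D'$, which is part of the \emph{output} of the Homological Perturbation Lemma and is therefore available only under the hypothesis $\operatorname{d}_{\mathbb V}^{\,2}=0$. But the splitting Lemma asserts existence and uniqueness of $\operatorname{L}$ on \emph{every} Cartan geometry of type $G/P$, and the first clause of the theorem (well-definedness of the BGG sequence everywhere, not just where the twisted de~Rham is a complex) needs this. The repair is straightforward: from the series $\iota'=\sum_k(\pm Gt)^k\iota$ one checks $\partial^*\operatorname{d}_{\mathbb V}\iota'=0$ directly, using only that $\iota'$ takes values in $\ker\partial^*$, that $\Box\iota=0$, and that $\Box G=\partial^*$, with no appeal to $\operatorname{d}_{\mathbb V}^{\,2}=0$. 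Concretely, on $\ker\partial^*$ one has $\partial^*\operatorname{d}_{\mathbb V}=\Box+\partial^*t$, and the recursion $\iota'=\iota\mp Gt\,\iota'$ forces $\Box\iota'=\mp\partial^*t\,\iota'$, so the correct sign in the geometric series is determined by this requirement rather than by an abstract HPL convention. With that in place, $\iota'$ satisfies all three conditions of the Lemma on every Cartan geometry, uniqueness gives $\iota'=\operatorname{L}$ unconditionally, and your HPL argument then handles the second clause of the theorem exactly as written.
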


A good example is the case when the Cartan geometry is torsion-free and the
curvature values act trivially on $\mathbb V$. Then the comparison
\eqref{cov-dif} of the twisted exterior differential and the covariant
exterior differential implies that the twisted de-Rham sequence will be
exact.

Often only a part of the whole BGG sequence is exact and many celebrated
complexes known in differential geometry can be recovered this way.

\subsection{The first BGG operators}
Finally, we are coming back to the first operators in BGG sequences. They
are always overdetermined operators
$\operatorname{D}:H^0_{\mathbb V}M\to H^1_{\mathbb V}M$. 
Moreover, by the very construction, its space of
solutions is in bijection with the space of the parallel tractors on the
homogeneous model. Unfortunately, this is not true in general and the so
called \emph{normal solutions} are those sections in the kernel of
$\operatorname{D}$ which correspond to parallel tractors. See \cite{H} for
interesting results on the normal solutions. Because of lack of
space in this last lecture, we shall just report briefly on the available
results. 

As carefully explained in \cite{D}, the normalization condition on the
canonical tractor connections can be written as $\partial^*(R^{\mathcal
V})=0$, 
considered on the space of 2-forms
valued in  endomorphisms
$\mathcal V\otimes \mathcal V^*$. 
At the same time, the normalization necessary for keeping the 1-1
correspondence between the solutions and the parallel tractors is rather 
$\partial^*_{\mathbb V}R^{\mathcal V}=0$, 
where the codifferential is modified, see \cite{D}.

So, although the values of our operator $\operatorname{L}$ on the
harmonic curvature are always algebraically co-closed, this is not enough.
  
%

The paper \cite{D} answers positively the question: 
Can we modify the Cartan connection so that 
$\partial^*_{\mathbb{V}} \circ \operatorname{d}_{\mathbb{V}} \circ \operatorname{L} (\kappa) = 0$
and thus the 1-1 correspondence will hold true for all Cartan geometries?

The first useful observation is the fact that the BGG machinery construction
survives without any changes if we restrict the deformations to the class of
connections: 
\[ 
C = \{ \tilde{\nabla} = \nabla + \Phi\ |\  \Phi \in \operatorname{ker} \partial^*_{\mathbb{V}} \otimes \operatorname{id} _{\mathbb{V}}, \Phi \text{\ has homogeneity $\geq 1$} \} 
\]

The main theorem of \cite{D} says:

\begin{theorem}
There is precisely one $\tilde{\nabla} \in C$ providing the $1-1$ 
correspondence between $\operatorname{ker} \operatorname{D}_0 $ and $\tilde{\nabla}$-parallel tractors.
\end{theorem}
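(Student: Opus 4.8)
The plan is to mimic the deformation-and-normalization scheme that produced the normal Cartan connection via \eqref{final-Rho}, now applied to the tractor connection on $\mathcal V$ and to the modified codifferential $\partial^*_{\mathbb V}$. The first move is to replace the geometric assertion by a purely algebraic one. By the analysis of \cite{D}, a connection $\tilde\nabla\in C$ yields the $1$--$1$ correspondence between $\ker\operatorname{D}_0$ and $\tilde\nabla$-parallel tractors precisely when its curvature $R^{\tilde\nabla}$, regarded as a two-form valued in $\operatorname{End}\mathcal V$, satisfies the normalization $\partial^*_{\mathbb V}R^{\tilde\nabla}=0$. Hence it suffices to show that there is exactly one deformation $\Phi\in\ker\partial^*_{\mathbb V}$ of homogeneity $\ge 1$ with $\partial^*_{\mathbb V}R^{\nabla+\Phi}=0$.

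First I would record how the curvature depends on $\Phi$. For $\tilde\nabla=\nabla+\Phi$ one has $R^{\tilde\nabla}=R^{\nabla}+\operatorname{d}^{\nabla}\Phi+\frac12[\Phi,\Phi]$, and, exactly as in Lemma \ref{curvature-deformation}, both the covariant term and the quadratic term strictly raise homogeneity. Thus the lowest-order effect of a homogeneity-$\ell$ piece $\Phi_\ell$ on the curvature is the purely algebraic term $\partial_{\mathbb V}\Phi_\ell$, and it is this feature that opens the door to an induction on homogeneity.

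The core of the argument is then this induction. Assume $\Phi$ has been fixed in all homogeneities $<\ell$ so that $\partial^*_{\mathbb V}R^{\tilde\nabla}$ already vanishes there. In homogeneity $\ell$ the equation reads $\partial^*_{\mathbb V}\partial_{\mathbb V}\Phi_\ell=\Psi_\ell$, where $\Psi_\ell=-\partial^*_{\mathbb V}[R^{\nabla}+(\text{lower-order contributions})]_\ell$ is already determined and lies in $\operatorname{im}\partial^*_{\mathbb V}$. Since $\Phi_\ell\in\ker\partial^*_{\mathbb V}$, the term $\partial_{\mathbb V}\partial^*_{\mathbb V}\Phi_\ell$ drops out and the equation becomes $\Box_{\mathbb V}\Phi_\ell=\Psi_\ell$ with $\Box_{\mathbb V}=\partial_{\mathbb V}\partial^*_{\mathbb V}+\partial^*_{\mathbb V}\partial_{\mathbb V}$. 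Invoking the algebraic Hodge theory of \eqref{Hodge-decomposition} for the module governing $\operatorname{End}\mathbb V$, the Laplacian $\Box_{\mathbb V}$ acts by nonzero scalars on every non-harmonic irreducible component, so it is invertible on $\operatorname{im}\partial^*_{\mathbb V}$ and $\Phi_\ell=\Box_{\mathbb V}^{-1}\Psi_\ell$ is the unique solution lying there; stringing these together over all $\ell$ assembles $\Phi$.

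The delicate point, which I expect to be the main obstacle, is the harmonic ambiguity. The Hodge splitting gives $\ker\partial^*_{\mathbb V}=\operatorname{im}\partial^*_{\mathbb V}\oplus\ker\Box_{\mathbb V}$, so $\Box_{\mathbb V}\Phi_\ell=\Psi_\ell$ pins down only the $\operatorname{im}\partial^*_{\mathbb V}$-component of $\Phi_\ell$ and leaves a potential harmonic component free. Securing both existence and uniqueness therefore reduces to showing that no positive-homogeneity harmonic deformations of the relevant type arise within $C$; this is precisely where the exact definition of the modified codifferential $\partial^*_{\mathbb V}$ and the Kostant-type description of the cohomology attached to $\operatorname{End}\mathbb V$ must be used, and it is the technical heart of \cite{D}. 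Once that vanishing is established, the induction runs unobstructed and produces the unique $\Phi$, hence the unique $\tilde\nabla\in C$.
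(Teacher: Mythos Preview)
The paper does not prove this theorem at all; it is simply quoted as ``the main theorem of \cite{D}'' and the reader is referred there. So there is no in-paper proof to compare your proposal against.

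That said, your sketch does follow the shape of the argument in \cite{D}: translate the $1$--$1$ correspondence into the curvature normalization $\partial^*_{\mathbb V}R^{\tilde\nabla}=0$, then run an induction on homogeneity in which the lowest-order change of curvature is algebraic and one solves $\Box_{\mathbb V}\Phi_\ell=\Psi_\ell$ at each step. This is exactly the pattern of the paper's own normalization of Cartan connections culminating in \eqref{final-Rho}, and \cite{D} is explicitly built on that analogy.

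One genuine caution: you invoke \eqref{Hodge-decomposition} for $\Box_{\mathbb V}$, but the modified codifferential $\partial^*_{\mathbb V}$ of \cite{D} is \emph{not} the Kostant adjoint of the Lie-algebra differential on $\operatorname{End}\mathbb V$; that is the whole point of the word ``modified''. Hence there is no automatic Hodge decomposition, $\Box_{\mathbb V}$ need not be self-adjoint, and its invertibility on $\operatorname{im}\partial^*_{\mathbb V}$ is not a consequence of Kostant's theorem. Establishing the requisite disjointness of $\operatorname{im}\partial$ and $\operatorname{im}\partial^*_{\mathbb V}$ (and hence that the induction closes and has no harmonic ambiguity) is precisely the technical content of \cite{D}, not a corollary of the standard machinery. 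You correctly flag this as the delicate point, but your write-up should make clear that it is new input from \cite{D} rather than an application of \eqref{Hodge-decomposition}.
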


At the very end, let us look again at the case of the `conformal to 
Einstein' equation \eqref{Einstein condition}, 
which is the first BGG operator for the choice of $\mathcal V$ equal to the
conformal standard tractors $\mathcal T$. 

Clearly, $H^0_{\mathbb T}M$ is the
projecting part $\mathcal E[1]$ of the tractors.
Further, a straightforward check reveals that the operator \eqref{Thomas
D-operator}
satisfies the conditions on $\operatorname{L}:H^0_{\mathbb T}M\to
\Omega^1(TM,\mathcal TM)$. Indeed, the entire space of zero-forms is 
in the kernel of $\partial^*$, the exterior derivative
$d^\omega$ is just the covariant derivative
\eqref{standard_tractor_connection} of the tractor, its projecting
slot vanishes, $\partial^*$ maps the injecting slot to zero by the
homogeneity, and $\partial^*$ is given by the trace in the middle slot, 
which vanishes, too. Since the geometry is torsion free, 
the exterior covariant derivative coincides with the twisted
derivative, see \eqref{cov-dif}. Finally, the projection of
$\operatorname{d}_{\mathbb T}\circ \operatorname{L}$ 
to the harmonic component provides just the
right operator \eqref{Einstein condition} on $\mathcal E[1]$.

In this very special case, there is no need to modify the tractor connection
in the above sense and thus there always is the 1-1 correspondence between
the solutions and the parallel tractors, which is again realized directly by
the operator $\operatorname{L}$. 

As already mentioned, many of important overdetermined operators appear as the first
BGG operators. A vast supply of interesting examples of the first order ones
appear in relation with the generalization of the classical problem of
metrizability of a projective geometry into the realm of filtered manifolds
and parabolic geometry. The projective case goes back to 19th century, the
generalization was recently worked out in \cite{K}.

\bigskip
\noindent\textbf{Acknowledgements.} The authors would like to thank the
organizers of the Summer School in Wis\l a for the hospitality and great
organization. The first author is also grateful to the audience for the
patience and good questions. 

The authors are most grateful to the anonymous
referee who pointed out several gaps in the original manuscript. 

The first author also acknowledges the support by the grant Nr. GA20-11473S
of the Czech Grant Agency. The second author was supported by the grant
MUNI/A/0885/2019 of the Masaryk University.

\end{document}